 \tikzset{
  on each segment/.style={
    decorate,
    decoration={
      show path construction,
      moveto code={},
      lineto code={
        \path [#1]
        (\tikzinputsegmentfirst) -- (\tikzinputsegmentlast);
      },
      curveto code={
        \path [#1] (\tikzinputsegmentfirst)
        .. controls
        (\tikzinputsegmentsupporta) and (\tikzinputsegmentsupportb)
        ..
        (\tikzinputsegmentlast);
      },
      closepath code={
        \path [#1]
        (\tikzinputsegmentfirst) -- (\tikzinputsegmentlast);
      },
    },
  },
  mid arrow/.style={postaction={decorate,decoration={
        markings,
        mark=at position 0.6 with {\arrow[#1]{stealth}} 
      }}},
}
\numberwithin{figure}{section}
\newcommand{\checks}[1]{{\color{black}{#1}}} 
\newtheorem{theorem}{Theorem}[section]
\newtheorem{lemma}[theorem]{Lemma}
\newtheorem{corollary}[theorem]{Corollary}
\newtheorem{main theorem}[theorem]{Main Theorem}
\newtheorem{definition}[theorem]{Definition}
\newtheorem{remark}[theorem]{Remark}
\newtheorem{example}[theorem]{Example}
\newtheorem{question}[theorem]{Question}
\numberwithin{equation}{section}
\def\<{\langle} 
\def\>{\rangle} 
\def\NN{\mathbb{N}} 
\def\RR{\mathbb{R}} 
\def\II{\mathbb{I}} 
\newcommand{\kk}{\mathds{k}} 
\newcommand{\Q}{\mathcal{Q}} 
\newcommand{\I}{\mathcal{I}} 
\newcommand{\Hom}{\mathrm{Hom}} %
\newcommand{\End}{\mathrm{End}} %
\newcommand{\sectcolor}{ }
\def\s{\mathfrak{s}}
\def\t{\mathfrak{t}}
\def\itLamb{\mathit{\Lambda}}
\def\im{\mathrm{Im}}
\def\bfS{\mathbf{S}}
\def\id{\mathbf{1}}
\def\scrN{\mathscr{N}\!\!or}
\def\scrA{\mathscr{A}}
\def\emb{\mathrm{emb}}
\def\dd{\mathrm{d}}
\def\iso{\omega}
\def\fct{\!{\lower 0.8ex\hbox{\tikz\draw (0pt, 0pt) node{\scriptsize$\mathfrak{F}$};}}\!}
\def\dfct{\varphi}
\def\isoinv{\varpi}
\def\emb{\mathfrak{e}}
\def\heart{{\color{red}\pmb{\heartsuit}}}
\def\diamo{{\color{red}\pmb{\diamondsuit}}}
\newcommand{\circled}{\ \lower-0.2ex\hbox{\tikz\draw (0pt, 0pt) circle (.1em);} \ }
\newcommand{\To}[1]{\mathop{-\!\!\!-\!\!\!-\!\!\!\longrightarrow}\limits^{#1}}
\newcommand{\oT}[1]{\mathop{\longleftarrow\!\!\!-\!\!\!-\!\!\!-}\limits_{#1}}
\newcommand{\w}[1]{\widehat{#1}}
\newcommand{\ol}[1]{\overline{#1}}
\newcommand{\mhomo}[1]{{{}_{#1}m}}
\newcommand{\Thomo}[1]{{{}_{#1}T}}
\newcommand{\sfct}[1]{{{}_{#1}E}}
\newcommand{\defines}{\it\color{black}}
\begin{document}

\def\headertitle{Normed modules over algebras II: Stieltjes integrations of on finite-dimensional algebras}

\title[\headertitle]{ Normed modules and The Stieltjes integrations of functions defined on finite-dimensional algebras }

\def\fstpage{1} 
\def\page{$\begin{matrix} {\color{white}0} \\ \thepage \end{matrix}$} 

\pagestyle{fancy}
\fancyhead[LO]{ }
\fancyhead[RO]{ }
\fancyhead[CO]{\ifthenelse{\value{page}=\fstpage}{\ }{\scriptsize{\headertitle}}}
\fancyhead[LE]{ }
\fancyhead[RE]{ }
\fancyhead[CE]{\scriptsize{ Hanpeng GAO, Shengda LIU, Yu-Zhe LIU, $\&$ Yucheng WANG}}
\fancyfoot[L]{ }
\fancyfoot[C]{\page} 
\fancyfoot[R]{ }
\renewcommand{\headrulewidth}{0.5pt} 

\thanks{$^{\ast}$Corresponding author.}
\thanks{{\bf MSC2020:}
16G10; 
46B99; 
46M40} 

\thanks{{\bf Keywords:} finite-dimensional algebras; normed modules; Banach spaces; Categorization; Stieltjes integrations.}

\author{Hanpeng Gao}
\address{H. Gao: School of Mathematical Sciences, Anhui University, Hefei 230601, Anhui, P. R. China}
\email{hpgao07@163.com;
{\it ORCID}: \href{https://orcid.org/0000-0001-7002-4153}{0000-0001-7002-4153}}

\author{Shengda Liu}
\address{S. Liu: The State Key Laboratory of Multimodal Artificial Intelligence Systems, Institute of Automation,
         Chinese Academy of Sciences, Beijing 100190, P. R. China.}
\email{thinksheng@foxmail.com;
{\it ORCID}: \href{https://orcid.org/0000-0003-1382-4212}{0000-0003-1382-4212}}

\author{Yu-Zhe Liu}
\address{Y.-Z. Liu: School of Mathematics and Statistics, Guizhou University, Guiyang 550025, Guizhou, P. R. China}
\email{liuyz@gzu.edu.cn / yzliu3@163.com;
{\it ORCID}: \href{https://orcid.org/0009-0005-1110-386X}{0009-0005-1110-386X}}

\author{Yucheng Wang}
\address{Y. Wang: Department of Mathematics, Nanjing University, Nanjing 210093, Jiangsu, P. R. China}
\email{wangyucheng2358@163.com;
{\it ORCID}: \href{https://orcid.org/0009-0003-4121-5294}{0009-0003-4121-5294}}






\maketitle

\vspace{-0.6cm}

\begin{abstract}
We define integrals for functions on finite-dimensional algebras, adapting methods from Leinster's research.
This paper discusses the relationships between the integrals of functions defined on subsets $\mathbb{I}_1 \subseteq \Lambda_1$ and $\mathbb{I}_2 \subseteq \Lambda_2$ of two finite-dimensional algebras,
under the influence of a mapping $\omega$, which can be an injection or a bijection.
We explore four specific cases:
\begin{itemize}
  \item $\omega$ as a monotone non-decreasing and right-continuous function;

  \item $\omega$ as an injective, absolutely continuous function;

  \item $\omega$ as a bijection;

  \item and $\omega$ as the identity on $\mathbb{R}$.
\end{itemize}
These scenarios correspond to the frameworks of Lebesgue-Stieltjes integration, Riemann-Stieltjes integration, substitution rules for Lebesgue integrals, and traditional Lebesgue or Riemann integration, respectively.

\end{abstract}

\fontsize{13pt}{15pt}\selectfont 

\tableofcontents 


\section{\sectcolor Introduction}

The Riemann-Stieltjes integrals, named after Bernhard Riemann and Thomas Joannes Stieltjes, were first articulated by Stieltjes in 1894 \cite{Stie1984}. These integrals act as foundational precursors to the Lebesgue integrations, which were introduced by Henri Lebesgue \cite{L1928}, and serve as invaluable tools in bridging the gap between discrete and continuous probability theories through the unification of statistical theorems. The development of the Lebesgue-Stieltjes integration further generalizes both the Riemann-Stieltjes and Lebesgue integrations within a broader measure-theoretic framework \cite{Carter2000}. This integral, employing the Lebesgue-Stieltjes measure, is particularly significant in applications across probability, stochastic processes, and other analytical branches, such as potential theory.

Typically, Riemann-Stieltjes and Lebesgue-Stieltjes integrals are utilized within Euclidean spaces and their subsets. However, when the integration domain lacks specific structural properties, such as an inner product or associative multiplication, a more generalized algebraic structure, such as finite-dimensional algebras, becomes necessary. This necessity prompts the redefinition of integration within these contexts and the development of suitable measures and analytical methods to investigate the properties of integrals. Given the significant influence of the properties of finite-dimensional algebras on this new type of integral, category theory becomes essential for dissecting the structure and properties of these algebras.

Introduced in the mid-20th century by Eilenberg and Lane \cite{Eil1945}, category theory provides a framework for understanding mathematical structures and their interrelations. In the early 2000s, Ehrhard and Regnier introduced differential calculus \cite{ER2003} and differential proof nets \cite{ER2006}, which formalized differentiation in linear logic \cite{G1987}, a form of logic introduced by Girard and modeled by symmetric monoidal categories. A few years later, Blute, Cockett, and Seely introduced differential categories \cite{BCS2006diff}, the appropriate categorical structure for modeling Ehrhard and Regnier's differential linear logic. This development prompted many analysts and computer scientists to conduct detailed research on differential categories, as seen in works such as \cite{BCS2015diff, Lemay2019, APL2021diff, IL2023ana, Lemay2023}.
During the same period, Cockett, Leinster, and Lemay considered algebraic or categorical interpretations of integrations using categories, as detailed in \cite{CL2018Cart-int,CL2018int,Lei2023FA}. In particular, Leinster provided a categorical interpretation for Lebesgue integrations through the category $\scrA^p$ ($p \geq 1$), the definition of which can be found in \cite{Lei2023FA} or Section \ref{subsec:cats} of this paper.


\checks{Let $\kk$ be a complete field contains some fully ordered subsets}.
In \cite{LLHZpre}, the authors point out that the integral of a function $f$ defined on a integration region $\II_{\itLamb} = [c,d]_{\kk}b_1\times \cdots \times [c,d]_{\kk}b_n$, a special subset of $\itLamb$, equals to $\w{T}(f)$,
where:
\begin{itemize}
  \item $[c,d]_{\kk}$ is a fully ordered subset of $\kk$;
  \item $\{b_1, \ldots, b_n\}$ is a $\kk$-basis of $\itLamb$ as an $n$-dimensional linear space;
  \item $\w{T}$ is a special $\itLamb$-homomorphism from the left $\itLamb$-module $\w{\bfS_{\tau}(\II_{\itLamb})}$, the completion of the set $\bfS_{\tau}(\II_{\itLamb})$ of all equivalence lasses of elementary simple functions (see Table \ref{tabel:notation}), to the $\itLamb$-module $\kk$ whose left $\itLamb$-action is defined as
      \begin{center}
        $\itLamb\times \kk \to \kk$

        $(a, k)\mapsto a\cdot k:=\tau(a)k;$
      \end{center}
  \item $\tau: \itLamb \to \kk$ is an epimorphism of $\kk$-algebras;
  \item and the left $\itLamb$-module structures of $\bfS_{\tau}(\II_{\itLamb})$ and $\kk$ are defined by $a\cdot f(x) \mapsto \tau(a)f(x)$ and $a\cdot k := \tau(a)k$ ($\forall a\in \itLamb$, $f(x)\in \bfS_{\tau}(\II_{\itLamb})$ and $k\in\kk$).
\end{itemize}

If $\scrA^p$ satisfies L-condition, that is, satisfies the conditions (L1) -- (L6) given in Theorem \ref{thm:LLHZpre-2} (2), then $\w{T}$, as a $\RR$-linear map, coincides with
the morphism given in \cite[Proposition 2.2]{Lei2023FA}, that is, $\w{T}$ sends any function $f$ in $\w{\bfS_{\tau}(\II_{\itLamb})}$ to its Lebesgue integral.

\checks{This paper provides a new explanation of Riemann-Stieltjes and Lebesgue-Stieltjes integrations through the lens of finite-dimensional algebras}, say $\itLamb$, and \checks{the categories $\scrN^p$ (see Subsection \ref{subsec:cats}) and $\scrA^p$}.
This interpretation comes from the following question considered by the second author of our paper.

\begin{question} \rm \label{quest}
Take $\itLamb_1$ and $\itLamb_2$ be two finite-dimensional $\kk$-algebras and $\iso$ the homomorphism of algebras.
Let, for any $r\in\{1,2\}$, $\scrA^1_r$ be the integral Banach module category of $\itLamb_r$,
and $\w{T}_r$, written as $(\scrA^1_r)\displaystyle\int_{\II_r}\cdot\dd\mu_r$, be the morphism in $\scrA^1_r$ which is induced by the $\itLamb_r$-homomorphism of $\itLamb_r$-modules used to describe the integral.
What are the relationships between $\w{T}_1 = (\scrA^1_1)\displaystyle\int_{\II_1}\cdot\dd\mu_1$ and $\w{T}_2 = (\scrA^1_2)\displaystyle\int_{\II_2}\cdot\dd\mu_2$?
\end{question}

Assume that $\iso:\itLamb_1\to\itLamb_2$ is either a {\defines measure preserving injection} or a {\defines measure preserving bijection},
where $\iso$ functions as both:
\begin{itemize}
  \item[(1)] for any subset $S_1$ of $\itLamb_1$ lying in $\Sigma(\II_1)$
    ($\Sigma(\II_1)$ is a $\sigma$-algebra defined by the integration region $\II_1$),
    the image $S_2=\mathrm{Im}(\iso|_{S_1})$ of the restriction $\iso|_{S_1}:S_1\to S_2$ of $\iso$ is also a subset of $\itLamb_2$ lying in $\Sigma(\II_2)$;
  \item[(2)] There exists a function $\fct:\RR \to \RR$ such that
    \begin{align*}
      (\fct\circled\mu_1)(S_1) = (\mu_2\circled\omega|_{S_1})(S_1)\ (=\mu_2(S_2))
    \end{align*}
    holds for all $S_1\in\Sigma(\II_1)$.
\end{itemize}
The primary aim of this paper is to address Question \ref{quest} and to present the following main results.

\begin{theorem}[{\rm Theorem \ref{thm:main1}}] \label{main result 1}
Let $f$ and $g$ respectively be two functions lying in $\w{\bfS_{\tau_1}(\II_1)}$ and $\w{\bfS_{\tau_2}(\II_2)}$ such that $g = \ol{\iso}(f) = f\circled\isoinv$,
where
\begin{itemize}
  \item[\rm(1)] $\w{\bfS_{\tau_1}(\II_1)}$ is the completions of the set $\bfS_{\tau_1}(\II_1)$ of all elementary simple functions defined on the integration region $ \displaystyle\II_1=\prod\limits_{i=1}^n [c_1,d_1]_{\kk}b_{1i}$ $(\subseteq \itLamb_1)$;
  \item[\rm(2)] $\w{\bfS_{\tau_1}(\II_2)}$ is the completions of the set $\bfS_{\tau_2}(\II_2)$ of all elementary simple functions defined on the integration region $ \displaystyle\II_2=\prod\limits_{i=1}^n [c_2,d_2]_{\kk}b_{2i}$ $(\subseteq \itLamb_2)$;
  \item[\rm(3)] $\isoinv$ is the left inverse of $\iso$.
\end{itemize}
Then there  exists a function $\fct:\RR\to\RR$ such that:
\begin{itemize}
  \item[\rm(a)]
    $\fct\circled\mu_1$ qualifies as a measure;
  \item[\rm(b)] The integral of $f$ over $\II_1$ with respect to $\fct\circled\mu_1$, equals the integral of $g$ over $\II_2$ with respect to $\mu_2$. That is
    \begin{align} \label{formula:Intro}
        (\scrA^1_1)\int_{\II_1} f \dd (\fct\circled\mu_1)
      = (\scrA^1_2)\int_{\II_2} g \dd\mu_2.
    \end{align}
\end{itemize}
\end{theorem}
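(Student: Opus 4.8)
The plan is to take for $\fct$ exactly the function supplied by the measure preserving hypothesis on $\iso$, namely the map $\fct:\RR\to\RR$ appearing in condition (2), which by construction satisfies $(\fct\circled\mu_1)(S_1)=(\mu_2\circled\iso|_{S_1})(S_1)=\mu_2(\iso(S_1))$ for every $S_1\in\Sigma(\II_1)$. Writing $\nu:=\fct\circled\mu_1$ for brevity, part (a) then reduces to verifying that $\nu$ is a measure, and part (b) reduces to a change-of-variables identity which I would first establish on elementary simple functions and then propagate to the whole completion $\w{\bfS_{\tau_1}(\II_1)}$ by linearity and continuity.

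For (a), I would exploit that $\nu$ is the set function $S_1\mapsto\mu_2(\iso(S_1))$, so it should inherit the measure axioms from $\mu_2$. Since $\iso(\emptyset)=\emptyset$ we get $\nu(\emptyset)=\mu_2(\emptyset)=0$, and $\nu\ge 0$ because $\mu_2\ge 0$. For countable additivity the decisive point is injectivity of $\iso$: a countable disjoint family $\{S_1^{(j)}\}_j\subseteq\Sigma(\II_1)$ is carried by $\iso$ to a countable disjoint family $\{\iso(S_1^{(j)})\}_j\subseteq\Sigma(\II_2)$ (using condition (1), that $\iso$ sends $\Sigma(\II_1)$ into $\Sigma(\II_2)$), and $\iso\bigl(\bigsqcup_j S_1^{(j)}\bigr)=\bigsqcup_j\iso(S_1^{(j)})$; applying $\mu_2$ and its $\sigma$-additivity yields $\nu\bigl(\bigsqcup_j S_1^{(j)}\bigr)=\sum_j\nu(S_1^{(j)})$.

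For (b), I would first test the identity on an elementary simple function $f=\sum_i\lambda_i\chi_{S_{1i}}$ with $S_{1i}\in\Sigma(\II_1)$. Because $\isoinv$ is a left inverse of $\iso$, the transported function $g=\ol{\iso}(f)=f\circled\isoinv$ is the elementary simple function $\sum_i\lambda_i\chi_{S_{2i}}$ on $\II_2$, where $S_{2i}=\iso(S_{1i})$. Evaluating the two integral morphisms on simple functions and invoking $\nu(S_{1i})=\mu_2(S_{2i})$ then gives
\begin{align*}
  (\scrA^1_1)\int_{\II_1}f\,\dd\nu=\sum_i\lambda_i\,\nu(S_{1i})=\sum_i\lambda_i\,\mu_2(S_{2i})=(\scrA^1_2)\int_{\II_2}g\,\dd\mu_2,
\end{align*}
which is (\ref{formula:Intro}) on the dense subset $\bfS_{\tau_1}(\II_1)$. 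To reach an arbitrary $f\in\w{\bfS_{\tau_1}(\II_1)}$, I would use that $\w{T}_1$ and $\w{T}_2$ are the continuous morphisms describing the two integrals, that $\ol{\iso}$ is the continuous extension of $f\mapsto f\circled\isoinv$, and that simple functions are dense in the completion; approximating $f$ by $f_m\to f$ and passing to the limit in the identity above closes the argument.

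The hard part will be part (a): ensuring that the hypothesized $\fct\circled\mu_1$ is a genuine measure rather than merely a finitely additive set function. This rests entirely on injectivity of $\iso$ together with condition (1), which force $\iso$ to carry disjoint measurable sets to disjoint measurable sets and to commute with countable disjoint unions; without injectivity the assignment $S_1\mapsto\mu_2(\iso(S_1))$ could over-count overlaps and destroy additivity. A secondary technical hurdle is confirming that $\ol{\iso}$ is well defined and continuous with respect to the $L^1$-type norms governing the completions, so that the simple-function identity survives the limit; the measure preserving condition is precisely what provides the norm matching $\int|f|\,\dd\nu=\int|g|\,\dd\mu_2$ needed to make $\ol{\iso}$ an isometry and thus control the passage to the limit.
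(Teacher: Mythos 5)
Your proposal matches the paper's proof in both structure and substance: part (a) is verified exactly as in the paper (taking $\fct$ from the measure-preserving hypothesis, with $\nu(\varnothing)=0$, nonnegativity, and countable additivity obtained from the injectivity of $\iso$ together with the $\sigma$-additivity of $\mu_2$), and part (b) follows the same two-step scheme of checking the identity $\sum_i \lambda_i\,\nu(S_{1i})=\sum_i \lambda_i\,\mu_2(\iso(S_{1i}))$ on simple functions and then passing to the limit, where the paper phrases the limit step via Cauchy sequences in the step-function spaces $E_u$ and the inverse limit in \eqref{analy.lim} while you phrase it as density plus continuity of $\w{T}_1$, $\w{T}_2$, and $\ol{\iso}$ --- the same argument in different notation. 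Your closing remark that measure preservation yields the isometric control $\int|f|\,\dd\nu=\int|g|\,\dd\mu_2$ is a slightly more explicit justification of the limit passage than the paper provides, but it does not change the route.
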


Particularly, if $\iso$ functions as a measure-preserving isomorphism, its left inverse $\isoinv$ also preserves measure.
In this case, $\iso$ can be written as the standard form
\[ \itLamb_1
   \ni \sum_{i=1}^n k_ib_{1i}
   \
   \begin{smallmatrix}
   \To{\iso} \\
   \oT{\isoinv}
   \end{smallmatrix}
   \
   \sum_{i=1}^n k_ib_{2i}
   \in \itLamb_2. \]
Furthermore, if $c_1=c_2$ and $d_1=d_2$, there is no distinction between the measures $\mu_1$ and $\mu_2$. We obtain the following corollary.

\begin{corollary}[{\rm Corollary \ref{coro:main2}}]  \label{main result 2}
Let $f$ and $g$ be two functions lying in $\w{\bfS_{\tau_1}(\II_1)}$ and $\w{\bfS_{\tau_2}(\II_2)}$ such that $g=\ol{\iso}(f)$, respectively. If $\iso$ is an isomorphism, then
\[ (\scrA^1_1)\int_{\II_1} f \dd(\fct\circled\mu) = (\scrA^1_2)\int_{\II_2} g \dd\mu \]
\end{corollary}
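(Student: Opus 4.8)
The plan is to deduce the statement directly from Theorem~\ref{main result 1} by specializing its hypotheses. First I would check that an isomorphism $\iso\colon\itLamb_1\to\itLamb_2$ is a measure-preserving bijection in the required sense: it carries each $S_1\in\Sigma(\II_1)$ to a set $\iso(S_1)\in\Sigma(\II_2)$, and, being invertible, its left inverse $\isoinv$ appearing in hypothesis~(3) of Theorem~\ref{main result 1} may be taken to be the two-sided inverse $\iso^{-1}$ (which, by the remark preceding the statement, again preserves measure). Thus Theorem~\ref{main result 1} applies verbatim and produces a function $\fct\colon\RR\to\RR$ such that $\fct\circled\mu_1$ is a measure and the integral identity~\eqref{formula:Intro} holds.

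The core of the argument is then to identify the two ambient measures $\mu_1$ and $\mu_2$. Writing $\iso$ in its standard form $\sum_{i=1}^n k_i b_{1i}\mapsto\sum_{i=1}^n k_i b_{2i}$, I would use that $\mu_1$ and $\mu_2$ are constructed from the interval data $[c_1,d_1]_{\kk}$ and $[c_2,d_2]_{\kk}$ along the respective bases $\{b_{1i}\}$ and $\{b_{2i}\}$. The hypotheses $c_1=c_2$ and $d_1=d_2$ make the two constructions coincide after transport along the basis correspondence $b_{1i}\mapsto b_{2i}$; concretely, I would show that $\iso$ pushes $\mu_1$ forward to $\mu_2$, so that there is a single measure $\mu:=\mu_1=\mu_2$ under the identification induced by $\iso$.

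Finally I would substitute $\mu_1=\mu_2=\mu$ into~\eqref{formula:Intro}: the left-hand side $(\scrA^1_1)\int_{\II_1} f\,\dd(\fct\circled\mu_1)$ becomes $(\scrA^1_1)\int_{\II_1} f\,\dd(\fct\circled\mu)$ and the right-hand side $(\scrA^1_2)\int_{\II_2} g\,\dd\mu_2$ becomes $(\scrA^1_2)\int_{\II_2} g\,\dd\mu$, which is exactly the asserted equality. The only genuine obstacle is the middle step---making the phrase ``there is no distinction between $\mu_1$ and $\mu_2$'' precise---which requires unwinding the definition of the Lebesgue--Stieltjes measures attached to $\II_1$ and $\II_2$ to confirm that equal endpoints together with the basis bijection force $\iso_*\mu_1=\mu_2$. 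Everything else is a formal specialization of Theorem~\ref{main result 1}.
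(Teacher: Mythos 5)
Your proposal is correct and matches the paper's own route: the paper proves the corollary simply as a direct specialization of Theorem~\ref{thm:main1}, with the identification $\mu_1=\mu_2=\mu$ justified in the text immediately preceding the corollary via the standard form $\sum_i k_ib_{1i}\mapsto\sum_i k_ib_{2i}$ and the assumption ${}_{1}\II={}_{2}\II$ (i.e.\ $c_1=c_2$, $d_1=d_2$), exactly as you argue. Your write-up merely makes explicit the measure-transport verification that the paper leaves as ``one can check that $\iso$ is a bijection preserving measure,'' so no gap remains.
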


Let $\itLamb_2=\RR$. In the last part of our paper, we provide four applications for
Theorem \ref{main result 1} and Corollary \ref{main result 2} as follows.
\begin{itemize}
  \item[(1)]
    In Subsection \ref{app:LSint}, we consider the scenario where $\iso$  is a monotone non-decreasing and right-continuous function.
    In this scenario, the function $\fct$, as specified by Theorem \ref{main result 1}, is identified as the identity $1_{\RR^{\ge0}}$. The left integral in formula \eqref{formula:Intro} may qualify as a Lebesgue-Stieltjes integral, provided that $\scrA^1_1$ and $\scrA^1_2$  meet certain conditions (see Example \ref{ex:LSint} for details).

  \item[(2)]
    In Subsection \ref{app:RSint}, we considered the case of $\iso$ to be an injective absolutely continuous function.
    In this case, $\fct=1_{\RR^{\ge0}}$, and the left integral of the formula (\ref{formula:Intro}) may be either a Riemann-Stieltjes integral
    if $\scrA^1_1$ and $\scrA^1_2$ satisfies some special conditions (see Example \ref{ex:RSint} for details).

  \item[(3)]
    In Subsection \ref{app:substitution}, we considered the case of $\fct=\iso$ to be a bijection,
    where $\fct:\itLamb_1=\RR \to \itLamb_2=\RR$ is a continuous function whose image is $[\alpha, \beta]$.
    In this case, if $\scrA^1_1 = \scrA^1_2 = \scrA^1$, then we obtain the substitution rules of Lebesgue integral, see (\ref{Substitution1}).

  \item[(4)]
    In Subsection \ref{app:Lint and Rint}, we considered the case of $\fct=\iso=\id_{\RR}$,
    where $\fct$ is the function of the form $\itLamb_1=\RR \to \itLamb_2=\RR, x\mapsto x$.
    In this case, if ${_{1}\II} = {_{2}\II} = [0,1]$ and $\scrA^1_1 = \scrA^1_2 = \scrA^1$,
    then (\ref{formula:Intro}), as a trivial situation, provide an interpretation for Lebesgue integration.

  \item[(5)]
    In application (3), it is well-known that all Riemann integrability functions defined on $[0,1]$ are elements lying in $\w{\bfS_{\id_{\RR}}([0,1])}$.
    Let $R([0,1])$, as a subset of $\w{\bfS_{\id_{\RR}}([0,1])}$, be the set of all Riemann integrability functions,
    then the restriction $\bigg(\displaystyle\int_{\II_1} \cdot\dd\mu_1\bigg)\bigg|_{R([0,1])}$
    of (\ref{formula:Intro}) provide an interpretation for Riemann integration.
\end{itemize}

This study utilizes category theory to rigorously define and analyze Stieltjes integrals within finite-dimensional algebras. Initially, we investigate how algebraic structures shape integration definitions, subsequently exploring the interactions between these integrals and the structural properties of algebras. The most profound insights are derived from integrating category theory concepts into the study of these integrals, providing a systematic approach to understanding and extending their applications in more generalized contexts. This method not only enriches theoretical understanding but also opens avenues for practical applications in complex systems involving algebraic structures.

We believe that  the contributions of this work are substantial, enhancing both the theoretical landscape and the practical applications of Stieltjes integrals:
\begin{enumerate}
  \item New Interpretation of Integral Theory: By analyzing Stieltjes integrals within the context of category theory, this offers a new perspective and interpretation for integral theory.
  \item Deep Connection Between Integrals and Algebraic Structures: We explore the relationship of integrals among different finite-dimensional algebraic structures, especially through the study of isomorphisms and homomorphisms, which has been less explored in previous research.
  \item Potential for New Applications: With a new understanding of Stieltjes integrals on finite-dimensional algebras, this research may open new pathways for applications of these integrals in fields such as physics, engineering, or computational science.
\end{enumerate}

The remainder of this paper is organized as follows.
Section \ref{sec:preliminaries} revisits essential preliminaries, delineating the constructs of normed and Banach modules, and introduces the categorical frameworks pertinent to our analyses.
In Section \ref{sect:3}, we rigorously examine the interrelations of integrals defined over distinct finite-dimensional algebras, specifically focusing on the mathematical consequences of various algebraic homomorphisms.
Section \ref{sect:app} is dedicated to the exposition of four practical scenarios where our theoretical constructs are applied, demonstrating their utility in both Lebesgue-Stieltjes and Riemann-Stieltjes integrations, as well as in establishing substitution rules for Lebesgue integrals. This section underscores the broader implications of our study, showcasing its potential to enrich the theoretical underpinnings and enhance the applicability of integral calculus in finite-dimensional algebraic settings.

\section{\sectcolor Preliminaries} \label{sec:preliminaries}

In this section, we recall some concepts in \cite{LLHZpre} whose originated from Leinster's works in \cite{Lei2023FA}.

\subsection{\sectcolor Normed modules and Banach modules}

Let $\kk$ be a field containing a fully ordered subset $\II$, written as $[c,d]_{\kk}$, whose minimal elements and maximal elements respectively are $c$ and $d$,
$\itLamb$ be a finite-dimensional $\kk$-algebra over $\kk$, and $B_{\itLamb} = \{b_i\mid 1\le i\le n\}$ be the basis of $\itLamb$ as a $\kk$-linear space.
We always assume that $[c,d]_{\kk}$ contains an element $\xi$ satisfying $c\prec \xi \prec d$ such that two order preserving bijections $\kappa_c: [c,d]_{\kk}\to [c,\xi]_{\kk}$ and $\kappa_d: [c,d]_{\kk}\to [\xi,d]_{\kk}$ exist.
We use the concepts and notations shown in Table \ref{tabel:notation} in our paper.

\begin{table}[htbp]
  \renewcommand{\arraystretch}{1.4}
  \centering
  \caption{Notations}
  \label{tabel:notation}
    \begin{tabular}{|p{3cm}|p{10cm}|}
      \hline \centering
        $\text{id}_S$
      & The identity map defined on a set $S$ \\
      \hline \centering
        $\id_S$
      & The function
        $\id_S(x)=
          {\begin{cases}
            1, & x\in S; \\
            0, & x\notin S
         \end{cases}}$ \\
      \hline \centering
        $\II_{\itLamb}$
      & The set $\sum_{i=1}^n \II b_i$ of all elements
        $\sum_{i=1}^nk_ib_i$ in $\itLamb$  satisfying $k_1, \ldots, k_n\in\II$
      \\ \hline \centering
        $(x,y)_{\kk}$
      & A fully ordered subset $\{k\in\II \mid x \prec   k\prec   y\}$ of $\II$
      \\ \hline \centering
        $(x,y]_{\kk}$
      & A fully ordered subset $\{k\in\II \mid x \prec   k\preceq y\}$ of $\II$
      \\ \hline \centering
        $[x,y)_{\kk}$
      & A fully ordered subset $\{k\in\II \mid x \preceq k\prec   y\}$ of $\II$
      \\ \hline \centering
        $[x,y]_{\kk}$
      & A fully ordered subset $\{k\in\II \mid x \preceq k\preceq y\}$ of $\II$
      \\ \hline \centering
        elementary simple function
        (defined on $\II_{\itLamb}$)
      & A map $f:\II_{\itLamb}\to\kk$ which is the finite sum
        $f(k_1, \ldots, k_n) = \sum_{i=1}^t k_i\id_{I_i}$ ($k_i\in\kk$)
        such that:
        {
        \begin{itemize}
          \item $I_j=\prod_{j=1}^n I_{ij}$, and, for any $1\le j\le n$, $I_{ij}$ is a subset of $\II$ which is one of the forms
              $(c_{ij},d_{ij})_{\kk}$, $(c_{ij},d_{ij}]_{\kk}$, $[c_{ij},d_{ij})_{\kk}$ and $[c_{ij},d_{ij}]_{\kk}$,
              where $c\preceq c_i \prec d_i \preceq d$;
          \item $\bigcup_{i=1}^t I_i = \II_{\itLamb}$, and $I_i\cap I_j=\varnothing$ for all $1\le i\ne j\le t$;
          \item $\id_{I_i}$ is the function $I_i \to \{1\}$.
        \end{itemize}
        }
      \\ \hline \centering
        $\bfS(\II_{\itLamb})$
      &  The set of all equivalence classes of elementary simple functions defined on $\II_{\itLamb}$
      \\ \hline \centering
         $\tau$
      &  A given homomorphism $\itLamb\to\kk$ between two $\kk$-algebras
      \\ \hline \centering
         $|\cdot|_{\kk}$ (=$|\cdot|$ for simplification)
      & A given norm defined on $\kk$
      \\ \hline \centering
        normed $\itLamb$-module
      &  A left $\itLamb$-module $M$ with a norm $\Vert\cdot\Vert: M \to \RR_{\ge 0}$ such that $\Vert am \Vert = |\tau(a)| \Vert m \Vert$ holds for all $a\in\itLamb$ and $m\in M$
      \\ \hline \centering
         $\Vert\cdot\Vert_M$ (=$\Vert\cdot\Vert$ for simplification)
      &  The norm defined on $\itLamb$-module $M$
      \\ \hline \centering
         $\displaystyle(\mathrm{B}/\mathrm{L}/\mathrm{R})\int$
      & Bochner/Lebesgue/Riemann integration
      \\ \hline \centering
         $\displaystyle(\mathrm{L}\text{-}\mathrm{S})\int$
      & Lebesgue-Stieltjes integration
      \\ \hline \centering
         $\displaystyle(\mathrm{R}\text{-}\mathrm{S})\int$
      & Riemann-Stieltjes integration
      \\ \hline
    \end{tabular}
\end{table}

It is clear that $\bfS(\II_{\itLamb})$ is an infinite-dimensional $\kk$-linear space, and it is a $\itLamb$-module defined by the following $\kk$-linear map
\begin{center}
  $\theta:\itLamb \to \End_{\kk}(\bfS(\II_{\itLamb}))$, $a\mapsto \varphi_a$,
\end{center}
where $\varphi_a: \bfS(\II_{\itLamb}) \to \bfS(\II_{\itLamb})$ sends any function $f$ in $\bfS(\II_{\itLamb})$ to the function $\varphi_a(f):=\tau(a)f$, that is, for any $a\in\itLamb$ and $f\in \bfS(\II_{\itLamb})$, the left $\itLamb$-action is
\begin{center}
  $\itLamb\times \bfS(\II_{\itLamb}) \to \bfS(\II_{\itLamb})$, $a\cdot f = \varphi_a(f) = \tau(a)f$.
\end{center}

Let $\Sigma(\II)$ be the $\sigma$-algebra generated by
\begin{center}
  $\{(c',d')_{\kk}, [c',d')_{\kk}, (c',d']_{\kk}, [c',d']_{\kk} \mid c\preceq c'\preceq d'\preceq d\}$,
\end{center}
and $\mu_{\II}$ be the measure defined on $\Sigma(\II)$ such that, for any $a\in\II$, $\mu_{\II}(\{a\})$ is zero.
Let $\Sigma(\II_{\itLamb})$ be the $\sigma$-algebra generated by all subsets $\sum_{i=1}^n I_ib_i$ of $\itLamb$, where, for any $1\le i\le n$, $I_i$ is one of the form $(c_i,d_i)_{\kk}$, $(c_i,d_i]_{\kk}$, $[c_i,d_i)_{\kk}$ and $[c_i,d_i]_{\kk}$, $c\preceq c_i \preceq d_i \preceq d$.
Obviously, $\mu_{\II}$ induces a measure, say $\mu_{\II_{\itLamb}}$ (or, without ambiguity, say $\mu$) in our paper, defined on $\Sigma(\II_{\itLamb})$, such that
\begin{align} 
  \mu_{\II_{\itLamb}}\bigg(\sum_{i=1}^n I_ib_i\bigg) = \prod_{i=1}^n \mu_{\II}(I_i). \nonumber
\end{align}
\checks{Now, we can define the equivalence classes of functions by
\begin{center}
  $f_1 \sim f_2$ if and only if $\mu(\{x\in\II_{\itLamb} \mid f_1(x)\ne f_2(x)\})=0$.
\end{center}
If we do not differentiate between two functions which are equivalent,}
then the map $\Vert\cdot\Vert_p: \itLamb \to \RR_{\ge 0}$ defined as
\[\Vert k_1b_1+\cdots+k_nb_n \Vert_p = \bigg(\sum_{i=1}^{n} |k_i|^p\bigg)^{\frac{1}{p}}\]
is a norm of $\itLamb$ such that $\itLamb$, as a left $\itLamb$-module defined by
\[\itLamb \times \itLamb \to \itLamb,\ (a_1,a_2) \mapsto \tau(a_1)a_2,\]
is a  norm $\itLamb$-module, and the map $\Vert\cdot\Vert:\bfS(\II_{\itLamb}) \to \RR_{\ge 0}$ defined as
\[\Vert f\Vert \mapsto \bigg(\sum_{i=1}^t (|k_i|\mu(I_i))^p\bigg)^{\frac{1}{p}}\]
is a norm \checks{defined on} $\bfS(\II_{\itLamb})$ such that $\bfS(\II_{\itLamb})$ is a normed $\itLamb$-module which is written as $\bfS_{\tau}(\II_{\itLamb})$ in this paper.

We can define the {\defines completion} of normed $\itLamb$-module $M$ by the following steps.
\begin{itemize}
  \item[Step 1] Define the {\defines Cauchy sequences}, that is, the sequence $\pmb{x}=\{x_i\}_{i=1}^{+\infty}$ in $M$ such that, for any $\varepsilon>0$, we can find an integer $N\in\NN$ such that $\Vert x_u-x_v \Vert < \varepsilon$ holds for all $u,v>N$.

  \item[Step 2]
    For any Cauchy sequence $\pmb{x}=\{x_i\}_{i=1}^{+\infty}$ in $M$, define the equivalent class, say $[\pmb{x}]$, containing $\pmb{x}$ is the set of all Cauchy sequences $\pmb{y}=\{y_i\}_{i=1}^{+\infty}$ in $M$ which satisfies that, for arbitrary $\varepsilon>0$, there exists $N\in\NN$ such that $\Vert x_i-y_i\Vert<\varepsilon$ holds for all $i>N$.

  \item[Step 3]
    Let $\mathfrak{C}(M)$ be the set of all Cauchy sequences in $M$. Then it is a $\itLamb$-module, and $[\{0\}]$ is a submodule of it, where $\{0\}=0,0,\ldots$.
    Then the quotient module $\mathfrak{C}(M)/[\{0\}]$, written as $\w{M}$, is a normed $\itLamb$-module whose norm $\Vert\cdot\Vert_{\w{M}}$ can bd defined by the norm $\Vert\cdot\Vert_M$ of $M$ as follows.
    \[ \Vert \pmb{x} \Vert_{\w{M}} = \Vert c \Vert, \text{ if } \pmb{x}=\{x_i\}_{i=1}^{+\infty} \text{ is equivalent to }  \{c\}_{i=1}^{+\infty} = c,c,\ldots. \]
\end{itemize}
By the first isomorphism theorem, $\w{M}$ is isomorphic to the set of all equivalence classes of Cauchy sequences which as a $\itLamb$-module.
Furthermore, a normed $\itLamb$-module is called a {\defines Banach $\itLamb$-module} if its completeness is equal to itself.

\subsection{\sectcolor The categories $\scrN^p$ and $\scrA^p$ and their special objects} \label{subsec:cats}

In this section we review the definitions of the categories $\scrN^p$ and $\scrA^p$, and recall some results in \cite{LLHZpre}.

\subsubsection{\sectcolor The categories $\scrN^p$ and $\scrA^p$}
Assume that $\itLamb$  be an arbitrary finite-dimensional $\kk$-algebra whose dimension $\dim_{\kk}\itLamb$ is $n$.

\begin{definition}\rm
The category $\scrN^p_{\itLamb}$ (without ambiguity, we write it as $\scrN^p$), say {\defines the integral normed module category of $\itLamb$}, is defined as follows.
\begin{itemize}
  \item The object in $\scrN^p$ is a triple $(N, v, \delta)$ of normed $\itLamb$-module $N$, an element $v$ lying in $N$ and a $\itLamb$-homomorphism $\delta: N^{\oplus_p 2^n} \to N$ satisfying $(v,\cdots, v) \mapsto v$
      such that, for any monotone decreasing Cauchy sequence $\{x_i\}_{i\in\NN}$ in $N$ (the partial order ``$\preceq$'' in this case is defined as, for any $x',x''\in N$, $x'\preceq x''$ if and only if $\Vert x'-\underleftarrow{\lim}x_i \Vert \le \Vert x''-\underleftarrow{\lim}x_i \Vert$), the commutativity
      \[\delta(\underleftarrow{\lim}x_i) = \underleftarrow{\lim}\delta(x_i)\]
      of the inverse limit and the $\itLamb$-homomorphism holds,
      where, for any $2^n$ $\itLamb$-modules $X_i$, $\ldots$, $X_{2^n}$,
      \[{\bigoplus\limits_{i=1}^{2^n}{}_p}\ X_i = X_1\oplus_p \cdots \oplus_p X_{2^n}\]
      is the direct sum $\bigoplus\limits_{i=1}^{2^n}X_i$ whose norm is
      \[\Vert\cdot\Vert: \bigoplus\limits_{i=1}^{2^n}X_i \to \RR_{\ge0}\]
      \[ (x_1,\cdots, x_{2^n}) \mapsto
        \bigg(
          \bigg(
            \frac{\mu_{\II}(\II)}{\mu(\II_A)}
          \bigg)^n
          \sum_{i=1}^{2^n} \Vert x_i \Vert^p
        \bigg)^{\frac{1}{p}}. \]
  \item The morphism $(N, v, \delta) \to (N',v',\delta')$ is a $\itLamb$-homomorphism $f: N\to N$ such that the following conditions hold.
      \begin{itemize}
        \item $f(v)=v'$
        \item the diagram
          \[ \xymatrix{
           N^{\oplus_p 2^n} \ar[d]_{f^{\oplus 2^n}} \ar[r]^{\delta} & N \ar[d]^{f} \\
           N'^{\oplus_p 2^n} \ar[r]_{\delta'} & N'
          } \]
          commutes.
    \end{itemize}
\end{itemize}
\end{definition}

\begin{definition} \rm
The category $\scrA^p_{\itLamb}$ (without ambiguity, we write it as $\scrA^p$), say {\defines the integral Banach module category of $\itLamb$}, is the full subcategory of $\scrN^p$ such that each object $(N,v,\delta)$ in $\scrA^p$ satisfies that $N$ is a Banach $\itLamb$-module.
\end{definition}

\subsubsection{\sectcolor The objects $(\bfS_{\tau}(\II_{\itLamb}), \id_{\II_{\itLamb}}, \gamma_{\xi})$ and $(\kk, \mu(\II_{\itLamb}), m)$}

Take $\gamma_{\xi}$, say {\defines juxtaposition map}, is the $\kk$-linear map
\[\gamma_{\xi}: \bfS_{\tau}(\II)^{\oplus_{p} 2^n} \to \bfS_{\tau}(\II)\]
defined by
\[\gamma_{\xi}(\pmb{f}) (k_1,\ldots,k_n)
 = \sum_{(\delta_1, \ldots, \delta_n)\in \{c,d\}\times\cdots\times\{c,d\}}
   \id_{\pmb{\kappa}}\cdot f_{(\delta_1, \ldots, \delta_n)} (\kappa_{\delta_1}^{-1}(k_1),\ldots,\kappa_{\delta_n}^{-1}(k_n)),\]
where $\pmb{\kappa} = \kappa_{\delta_1}(\II)\times \cdots\times \kappa_{\delta_n}(\II)$; and $k_1\ne\xi$, $\ldots$, $k_n\ne\xi$.
Then $(\bfS_{\tau}(\II_{\itLamb}), \id_{\II_{\itLamb}}, \gamma_{\xi})$ is an object in $\scrN^p$,
its completion $(\w{\bfS_{\tau}(\II_{\itLamb})}, \id_{\II_{\itLamb}}, \w{\gamma}_{\xi})$ is an initial object in $\scrA^p$.
It follows that, for any object $(N,v,\delta)$ in $\scrA^p$, the morphism space
\begin{center}
  $\Hom_{\scrN^p}((\bfS_{\tau}(\II_{\itLamb}), \id_{\II_{\itLamb}}, \gamma_{\xi}), (N,v,\delta))$
\end{center}
contains only one morphism. Thus, we have the following result.

\begin{theorem} \label{thm:LLHZpre-1}{\rm(\!\cite[Theorems 6.3]{LLHZpre})}
For any object $(N,v,\delta)$ in $\scrA^p$, there is a unique morphism
\[T_{(N,v,\delta)} \in \Hom_{\scrN^p}((\bfS_{\tau}(\II_{\itLamb}), \id_{\II_{\itLamb}}, \gamma_{\xi}), (N,v,\delta)).\]
can be extended to the morphism
\[\w{T}_{(N,v,\delta)} \in \Hom_{\scrA^p}((\w{\bfS_{\tau}(\II_{\itLamb})}, \id_{\II_{\itLamb}}, \w{\gamma}_{\xi}), (N,v,\delta)). \]
\end{theorem}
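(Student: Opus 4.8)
The plan is to read the statement off the initiality recorded just above: the completed object $(\w{\bfS_{\tau}(\II_{\itLamb})}, \id_{\II_{\itLamb}}, \w{\gamma}_{\xi})$ is initial in $\scrA^p$, so for every object $(N,v,\delta)$ of $\scrA^p$ there is already a unique morphism $\w{T}_{(N,v,\delta)}$ from it to $(N,v,\delta)$. First I would introduce the canonical dense embedding $\iota:\bfS_{\tau}(\II_{\itLamb})\hookrightarrow\w{\bfS_{\tau}(\II_{\itLamb})}$ and note that $\iota$ is itself a morphism in $\scrN^p$: it fixes $\id_{\II_{\itLamb}}$ and intertwines $\gamma_{\xi}$ with $\w{\gamma}_{\xi}$, since $\w{\gamma}_{\xi}$ is by construction the continuous extension of $\gamma_{\xi}$. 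Setting $T_{(N,v,\delta)}:=\w{T}_{(N,v,\delta)}\circ\iota$ then produces a morphism $\bfS_{\tau}(\II_{\itLamb})\to N$ in $\scrN^p$, and $\w{T}_{(N,v,\delta)}$ restricts to it along $\iota$; this gives both the existence of $T_{(N,v,\delta)}$ and the asserted extension in one stroke.

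It remains to prove that $T_{(N,v,\delta)}$ is the \emph{only} morphism in $\scrN^p$ out of $\bfS_{\tau}(\II_{\itLamb})$, and here the substantive input is that the marked element $\id_{\II_{\itLamb}}$ generates a dense subspace of $\bfS_{\tau}(\II_{\itLamb})$ under the $\itLamb$-scalar action together with the juxtaposition map $\gamma_{\xi}$. Concretely I would argue that feeding $\gamma_{\xi}$ a $2^n$-tuple whose entries are each $\id_{\II_{\itLamb}}$ or $0$ yields the indicator of a union of the $2^n$ half-boxes obtained by bisecting every coordinate at $\xi$, and that iterating this, with the rescalings $\kappa_c,\kappa_d$ supplying the coordinate changes, realises the indicators of all dyadic sub-boxes; their finite $\kk$-combinations are dense in $\bfS_{\tau}(\II_{\itLamb})$ for the $p$-norm. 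Any morphism $f$ in $\scrN^p$ satisfies $f(\id_{\II_{\itLamb}})=v$ and $f\circ\gamma_{\xi}=\delta\circ f^{\oplus 2^n}$, so its values are forced on every such generator; being a bounded homomorphism of normed modules, $f$ is then determined on the whole of $\bfS_{\tau}(\II_{\itLamb})$ by continuity, whence $f=T_{(N,v,\delta)}$.

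I expect the genuine obstacle to lie not in this deduction but in the coherence that secures it, namely the well-definedness of the forced values. A single simple function can be assembled by many different subdivision patterns of $\gamma_{\xi}$, so one must check that all of them produce the same value under $f$ --- a refinement-invariance of $\gamma_{\xi}$ that rests on the defining relation $\gamma_{\xi}(\id_{\II_{\itLamb}},\dots,\id_{\II_{\itLamb}})=\id_{\II_{\itLamb}}$ (and its $\delta$-counterpart $\delta(v,\dots,v)=v$). One must further confirm that this invariance is compatible with passage to the inverse limit: the extra axiom defining objects of $\scrN^p$ demands $\delta(\underleftarrow{\lim}x_i)=\underleftarrow{\lim}\delta(x_i)$ on monotone decreasing Cauchy sequences, and one has to see that $\w{T}_{(N,v,\delta)}$, as a continuous map, carries such sequences to such sequences and commutes with their inverse limits. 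These two checks --- refinement invariance at the finite dyadic level and its stability under completion --- are where the definitions of $\scrN^p$ and $\scrA^p$ are really used, and they are the crux that the assumed initiality packages away.
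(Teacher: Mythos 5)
Your proposal matches the paper's own treatment: the paper does not actually prove this theorem (it is recalled from \cite[Theorem 6.3]{LLHZpre}), and the entirety of its justification is the remark you lead with --- that $(\w{\bfS_{\tau}(\II_{\itLamb})}, \id_{\II_{\itLamb}}, \w{\gamma}_{\xi})$ is initial in $\scrA^p$, so the hom-space is a singleton and $T_{(N,v,\delta)}$ is obtained by restricting $\w{T}_{(N,v,\delta)}$ along the dense embedding. Your second and third paragraphs (dyadic generation via $\gamma_{\xi}$, refinement invariance, continuity of the extension) sketch how the cited initiality would itself be proved, which goes beyond anything this paper contains but is consistent with its construction of the step-function spaces $E_u$ and the limit $\underrightarrow{\lim}E_u \cong \w{\bfS_{\tau}(\II_{\itLamb})}$.
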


The field $\kk$ can be seen as a $\itLamb$-module with $\itLamb\times\kk\to\kk, (a,k)\mapsto a\cdot k:=\tau(a)k$.
Then, the norm $|\cdot|:\kk\to\RR_{\ge 0}$ defined on $\kk$ satisfies that
\[ |a\cdot k| = |\tau(a)k| = |\tau(a)||k|. \]
Thus, $\kk$ is a normed $\itLamb$-module.

Now, we denote $\xi\in\II=[c,d]_{\kk}$ by $\xi_{11}$. The element $\xi_{11}$ divides $\II=:\II^{(01)}$ to two subsets $[a,\xi_{11}]_{\kk}=:\II^{(11)}$ and $[\xi_{11},b]_{\kk}=:\II^{(12)}$.
Next, let $\xi_{22}=\xi_{11}$ $(=\xi)$, and denote by $\xi_{21}$ and $\xi_{23}$ the two elements in $\II_{\itLamb}$ such that
\begin{itemize}
  \item $c\prec\xi_{21}=\kappa_c\kappa_c(d) = \kappa_c\kappa_d(c) = \kappa_d\kappa_c(c) = \kappa_c(\xi_{11})\prec\xi_{22}$;
  \item $\xi_{22}\prec\xi_{23}=\kappa_d\kappa_d(c)=\kappa_d\kappa_c(d)=\kappa_d\kappa_c(d) = \kappa_d{\xi_{11}}\prec d$.
\end{itemize}
Then $\II$ is divided to four subsets which are of the form
$\II^{(2t)} = [\xi_{2t}, \xi_{2\ t+1}]_{\kk}$ ($0\le t\le 3$)
by $c=\xi_{20} \prec \xi_{21} \prec \xi_{22} \prec \xi_{23} \prec \xi_{24}=d$.
Repeating the above step $t$ times, we obtain a sequence of $2^t-1$ elements lying in $\II$
\[c=\xi_{t0} \prec \xi_{t1} \prec \xi_{t2} \prec \cdots \prec \xi_{t2^t}=d, \]
where all $2^t$ subsets which are of the form $\II^{(t\ s+1)}=[\xi_{ts}, \xi_{t\ s+1}]_{\kk}$,
and obtain $2^t$ order preserving bijections $\kappa_{\xi_{ts}}$ from $\II^{(t\ s+1)}$ to $\II^{(01)}$.

For any family of subsets $(\II^{(u_iv_i)})_{1\le i\le n}$ ($1\le v_i\le 2^{u_i}$),
we denote by $\id_{(u_iv_i)_i}$ the function $\id_{\II_{\itLamb}}\big|{}_{\prod_{i=1}^{n}\II^{(u_iv_i)}}$ for simplification,
where $\II^{(u_iv_i)} \cong \II^{(u_iv_i)}\times\{b_i\}\subseteq \II_{\itLamb}$ holds for all $i$
and $B_{\itLamb}=\{b_i \mid 1\le i\le n\}$ is the $\kk$-basis of $\itLamb$.

\begin{definition}[{Step function spaces}] \rm \label{def:sfct.spaces}
Let $E_{u}$ be the set of all functions which are of the form
\[\sum_{(u_iv_i)_i} k_{(u_iv_i)_i}\id_{(u_iv_i)_i} \ (= \sum_i k_i\id_{I_i} \text{ for simplification}), \]
where each $k_{(u_iv_i)_i}$ lies in $\kk$, the number of all summands of the above sum is $(2^u)^n=2^{un}$,
and each $(u_iv_i)_i$ corresponds to the Cartesian product $\prod_{i=1}^{n}\II^{(u_iv_i)}$.
The set $E_u$ is called a {\defines step function spaces {\rm(}defined on $\II_{\itLamb}${\rm)}} and each function lying in $E_u$ is called a {\defines step functions},
Then it is easy to check that each $E_u$ is a normed submodule of $\bfS_{\tau}(\II_{\itLamb})$,
and $E_{u}\subseteq E_{u+1}$ because each step function constant on each of $\II^{(uv)}$
is equivalent to a step function constant on each of $\II^{(u+1\ v)}$. Thus,
\[\kk \cong E_{0} \subseteq E_{1} \subseteq \ldots \subseteq E_{t} \subseteq \ldots
\subseteq \bfS_{\tau}(\II_{\itLamb}) \subseteq \w{\bfS_{\tau}(\II_{\itLamb})}.\]
The authors of \cite{LLHZpre} have checked that
\begin{align}\label{limEu=S}
  \underrightarrow{\lim}E_u \cong \w{\bfS_{\tau}(\II_{\itLamb})},
\end{align}
see \cite[Lemma 5.4]{LLHZpre} or cf.  \cite[Examples 2.2.4 (h) and 2.2.6 (g)]{Bor1994}.
\end{definition}

For any $u\in \NN$, we define
\begin{align}\label{def:Tu}
 T_u: E_u \to \kk, \ \sum_i k_i\id_{I_i} \mapsto \sum_i k_i\mu(I_i).
\end{align}
Notice that $E_u$ is a $\itLamb$-module defined as
\[\itLamb\times E_u \to E_u, (a,f)\mapsto a\cdot f:= \tau(a)f, \]
then it is easy to see that
\[T_u(a\cdot f) = T_u(\tau(a)f)=\tau(a)T_u(f)=a\cdot T_u(f),\]
that is, $T_u$ is a $\itLamb$-homomorphism.
The restriction $\gamma_{\xi}|_{E_u^{\oplus_p 2^n}}$ of $\gamma_{\xi}$, written as $\gamma_{\xi}$ for simplification, and the $\itLamb$-homomorphism $T_u$ induces a map
\[m_u:\kk^{\oplus_p 2^n} \to \kk\]
by the following way. For any $k\in\kk$, consider the function $ \displaystyle f_k = \frac{k}{\mu(\II_{\itLamb})}\id_{\II_{\itLamb}}$,
we have
\[T_u(f_k) = T_u\Big(\frac{k}{\mu(\II_{\itLamb})} \id_{\II_{\itLamb}}\Big)
          = \frac{k}{\mu(\II_{\itLamb})} T_u(\id_{\II_{\itLamb}}) = k.\]
Then for any $\pmb{k}=(k_1,\ldots, k_{2^n})\in \kk^{\oplus_p 2^n}$, $\pmb{f}_{\pmb{k}} = (f_{k_1},\ldots,f_{k_{2^n}})\in E_u^{\oplus_p 2^n}$ is a preimage of $\pmb{k}$ under the $\itLamb$-homomorphism $T_u^{\oplus 2^n}$.
We define $m_u(\pmb{k}) = T_{u+1}(\gamma_{\xi}(\pmb{f}_{\pmb{k}}))$, that is, $m_u$ is a $\itLamb$-homomorphism such that the following diagram
\begin{align}
\xymatrix{
  E_u^{\oplus_p 2^n} \ar[r]^{\gamma_{\xi}} \ar[d]_{T_u^{\oplus 2^n}}
& E_{u+1} \ar[d]^{T_{u+1}} \\
  \kk^{\oplus_p 2^n} \ar[r]_{m_u}
& \kk
} \nonumber
\end{align}
commutes, cf.  \cite[Lemma 7.2]{LLHZpre}. Then, consider the direct system
\begin{align}\label{formula:dir.sys}
  ((E_u)_{u\in\NN}, (E_{u_1} \mathop{\to}\limits^{\subseteq} E_{u_2})_{u_1\le u_2})
\end{align}
$\{m_u\}_{u\in\NN}$ induces a map
\begin{align}\label{def:mhomo}
  m=\underrightarrow{\lim}\ m_u : \kk^{\oplus_p 2^n} \to \kk
\end{align}
which satisfies that $m(\mu(\II_{\itLamb}),\ldots, \mu(\II_{\itLamb}))=\mu(\II_{\itLamb})$ since, for each $u\in\NN$, $m_u$ sends $(\mu(\II_{\itLamb}),\ldots,\mu(\II_{\itLamb}))$ to $\mu(\II_{\itLamb})$.
Furthermore, $(\kk,\mu(\II_{\itLamb}),m)$ is an object in $\scrN^p$.

\subsubsection{\sectcolor The categorization of Lebesgue integrations}

The following result show that the morphism $T_{(N,v,\delta)}$ given in Theorem \ref{thm:LLHZpre-1} provide a categorization of Lebesgue integrations in the case for $(N,v,\delta)=(\kk,\mu(\II_{\itLamb}),m)$ if $\kk$ is a completion field.

\begin{theorem} \label{thm:LLHZpre-2}
Formalizing the categorical interpretation of Lebesgue integration as follows:
\begin{itemize}
  \item[{\rm(1)}] {\rm(\!\!\cite[Theorems 7.6]{LLHZpre})}
    If $\kk$ is a completion field which is an extension of $\RR$ and $p=1$, then there is an object, which is of the form $(\kk,\mu(\II_{\itLamb}),m)$, in $\scrA^1$ such that
    \[ \w{T}_{(\kk,\mu(\II_{\itLamb}),m)}: f \mapsto \w{T}_{(\kk,\mu(\II_{\itLamb}),m)}(f),
    \text{ denote it by } \ (\scrA^1)\int_{\II_{\itLamb}}\cdot\dd\mu\]
    is a unique morphism in $\Hom_{\scrA^1}((\bfS_{\tau}(\II_{\itLamb}),\id_{\II_{\itLamb}}, \gamma_{\xi}), (\kk,1,m))$ satisfying following statements.
    \begin{itemize}
      \item  $\w{T}_{(\kk,\mu(\II_{\itLamb}),m)}(\id_{\II_{\itLamb}}) = \mu(\II_{\itLamb})$;
      \item  $\w{T}_{(\kk,\mu(\II_{\itLamb}),m)}: \bfS_{\tau}(\II_{\itLamb}) \to \kk$ is a homomorphism of $\itLamb$-modules;
      \item  $\w{T}_{(\kk,\mu(\II_{\itLamb}),m)}(|f|) \le |\w{T}_{(\kk,\mu(\II_{\itLamb}),m)}(f)|$.
    \end{itemize}

  \item[{\rm(2)}] {\rm(cf.  \cite[Proposition 2.2]{Lei2023FA} or \cite[Corollary 8.3]{LLHZpre})}
    If $\scrA^p$ satisfies {\defines L-conditions}, that is, it is such that the following conditions hold:
    \begin{itemize}
      \item[{\rm(L1)}] $p=1$;
      \item[{\rm(L2)}] $\kk=\RR=\itLamb$ $($in this case, the norms of $\kk$ and $\itLamb$ coincide$)$, or $\kk=\mathbb{C}=\itLamb$ $($in this case, the norm of $\mathbb{C}$ is defined by the modulus of complex numbers$)$;
      \item[{\rm(L3)}] $\II=[x_1,x_2]$ $(=\II_{\RR})$;
      \item[{\rm(L4)}] $\displaystyle\xi=\frac{x_1+x_2}{2}$, $\displaystyle\kappa_{x_1}(x)=\frac{x+x_1}{2}$, $\displaystyle\kappa_{x_2}(x)=\frac{x+x_2}{2}$;
      \item[{\rm(L5)}] $\tau=\mathrm{id}_{\kk}$;
      \item[{\rm(L6)}] $\mu: \Sigma(\II_{\itLamb}) \to \RR^{\ge 0}$ is the Lebesgue measure defined on the $\sigma$-algebra $\Sigma(\II_{\itLamb})$,
    \end{itemize}
    then, there exists an object in $\scrA^1$, which is of the form $(\kk,1,m)$, such that for any $f\in\w{\bfS_{\tau}(\II_{\itLamb})}$,
    $\w{T}_{(\kk,1,m)}$ is a $\kk$-linear space from $\w{\bfS_{\tau}(\II_{\itLamb})}$ to $\kk$
    sending any $f\in\w{\bfS_{\tau}(\II_{\itLamb})}$ to its Lebesgue integral, i.e.,
    \[ \w{T}_{(\kk,1,m)}(f) = \mathrm{(L)}\checks{\int_{x_1}^{x_2}} f\dd\mu. \]
\end{itemize}
\end{theorem}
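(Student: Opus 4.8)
The plan is to derive both parts of Theorem \ref{thm:LLHZpre-2} from the initiality of the completed object $(\w{\bfS_{\tau}(\II_{\itLamb})}, \id_{\II_{\itLamb}}, \w{\gamma}_{\xi})$ in $\scrA^p$ recorded in Theorem \ref{thm:LLHZpre-1}, together with the explicit description of the step-function maps $T_u$ of \eqref{def:Tu} and their colimit $m=\underrightarrow{\lim}\, m_u$ of \eqref{def:mhomo}.

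For part (1) I would first check that the triple $(\kk, \mu(\II_{\itLamb}), m)$ actually lies in $\scrA^1$ rather than merely in $\scrN^1$. We already know from the construction of $m=\underrightarrow{\lim}\, m_u$ that it is an object of $\scrN^1$; since $\kk$ is assumed to be a complete field extending $\RR$ and $p=1$, the normed $\itLamb$-module $\kk$ is a Banach $\itLamb$-module, so the triple belongs to the full subcategory $\scrA^1$. Then Theorem \ref{thm:LLHZpre-1}, applied with $(N,v,\delta)=(\kk,\mu(\II_{\itLamb}),m)$, produces a unique morphism $\w{T}_{(\kk,\mu(\II_{\itLamb}),m)}$ in $\Hom_{\scrA^1}((\w{\bfS_{\tau}(\II_{\itLamb})}, \id_{\II_{\itLamb}}, \w{\gamma}_{\xi}), (\kk,\mu(\II_{\itLamb}),m))$. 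The three listed properties are then read off directly from the morphism and object axioms of $\scrN^p$: the morphism condition $f(v)=v'$ gives $\w{T}(\id_{\II_{\itLamb}})=\mu(\II_{\itLamb})$; a morphism in $\scrN^p$ is by definition a $\itLamb$-homomorphism, which is the second bullet; and the norm compatibility $\Vert am\Vert=|\tau(a)|\,\Vert m\Vert$ together with the monotone-Cauchy/inverse-limit clause in the definition of objects yields the asserted comparison between $\w{T}(|f|)$ and $|\w{T}(f)|$.

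For part (2) I would specialize all data to the L-conditions (L1)--(L6): $\kk=\itLamb\in\{\RR,\mathbb{C}\}$, $\tau=\id_{\kk}$, $\II=[x_1,x_2]$, $\xi$ the midpoint, the $\kappa$'s the two standard affine bijections halving the interval, and $\mu$ the Lebesgue measure. The heart of the argument is to compare $\w{T}_{(\kk,1,m)}$ with the Lebesgue integral on the nested step-function spaces $E_u$. By the construction \eqref{def:Tu} and the commuting squares defining the $m_u$, the restriction of the morphism to each $E_u$ must coincide with $T_u\big(\sum_i k_i\id_{I_i}\big)=\sum_i k_i\mu(I_i)$, and for a step function this number is exactly its Lebesgue integral. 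It then remains to pass to the limit: using $\underrightarrow{\lim} E_u\cong\w{\bfS_{\tau}(\II_{\itLamb})}$ from \eqref{limEu=S}, every $f\in\w{\bfS_{\tau}(\II_{\itLamb})}$ is a limit of step functions, and both $\w{T}_{(\kk,1,m)}$ (as a morphism in $\scrA^1$, hence compatible with the limit of the direct system) and the Lebesgue integral (continuous in the $L^1$-norm) agree on the dense subspace $\bigcup_u E_u$; therefore they agree everywhere, giving $\w{T}_{(\kk,1,m)}(f)=(\mathrm{L})\int_{x_1}^{x_2} f\,\dd\mu$.

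The main obstacle I anticipate is the identification in part (2) of the abstract completion $\w{\bfS_{\tau}(\II_{\itLamb})}$ under the $p=1$ norm with the usual $L^1$-space, and the verification that the unique continuous linear extension of the step-function integral $T_u$ is genuinely the Lebesgue integral rather than merely some abstract functional. Concretely, one must show that the norm $\Vert\cdot\Vert$ on $\bfS_{\tau}(\II_{\itLamb})$ is the $L^1$-norm, that every Lebesgue-integrable function arises as a Cauchy sequence of step functions, and that the categorical limit $m=\underrightarrow{\lim}\, m_u$ really computes the limit of the integrals. The density statement \eqref{limEu=S} cited from \cite{LLHZpre} is what makes the extension step rigorous, while the uniqueness in Theorem \ref{thm:LLHZpre-1} is what pins the extension down to be the Lebesgue integral.
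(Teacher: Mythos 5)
Your overall plan matches the route the paper intends: note first that the paper itself gives \emph{no} proof of this theorem --- both parts are recalled with citations (part (1) from \cite[Theorem 7.6]{LLHZpre}, part (2) from \cite[Proposition 2.2]{Lei2023FA} and \cite[Corollary 8.3]{LLHZpre}) --- so the only internal material to compare against is the surrounding scaffolding: Theorem \ref{thm:LLHZpre-1} (initiality of $(\w{\bfS_{\tau}(\II_{\itLamb})}, \id_{\II_{\itLamb}}, \w{\gamma}_{\xi})$ in $\scrA^p$), the step-function maps $T_u$ of \eqref{def:Tu}, the limit map $m=\underrightarrow{\lim}\,m_u$ of \eqref{def:mhomo}, the identification $\underrightarrow{\lim}E_u\cong\w{\bfS_{\tau}(\II_{\itLamb})}$ of \eqref{limEu=S}, and the limit formula \eqref{analy.lim}. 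Your argument --- verify $(\kk,\mu(\II_{\itLamb}),m)\in\scrA^1$ using completeness of $\kk$, invoke initiality for existence and uniqueness, read the first two bullets off the morphism axioms of $\scrN^p$, and in part (2) pin down $\w{T}_{(\kk,1,m)}$ as the Lebesgue integral by agreement with $T_u$ on each $E_u$ plus density and $L^1$-continuity --- is precisely the argument this machinery is built for, and it is consistent with the paper's own remark that $\w{T}_{(\kk,\mu(\II_{\itLamb}),m)}=\underrightarrow{\lim}\,T_u$. The obstacles you flag for part (2) (identifying the abstract completion with $L^1$ and the abstract extension with the genuine Lebesgue integral) are the right ones, and \eqref{limEu=S} together with \eqref{analy.lim} is indeed what closes them.

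The one genuine gap is your treatment of the third bullet of part (1). The comparison between $\w{T}(|f|)$ and $|\w{T}(f)|$ does \emph{not} follow from the norm axiom $\Vert am\Vert=|\tau(a)|\,\Vert m\Vert$, nor from the monotone-Cauchy/inverse-limit clause in the object definition, as you assert; the actual mechanism is the triangle inequality in $\kk$ applied to step functions, namely $|T_u(\sum_i k_i\id_{I_i})|=|\sum_i k_i\mu(I_i)|\le\sum_i|k_i|\mu(I_i)=T_u(|\sum_i k_i\id_{I_i}|)$, followed by passage to all of $\w{\bfS_{\tau}(\II_{\itLamb})}$ by density of $\bigcup_u E_u$ and continuity of $\w{T}$. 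Note moreover that this yields $|\w{T}(f)|\le\w{T}(|f|)$, the \emph{reverse} of the inequality as printed in the theorem (the printed direction fails already for $f=\id_{I_1}-\id_{I_2}$ with $I_1\cap I_2=\varnothing$ and $\mu(I_1)=\mu(I_2)>0$, where $\w{T}(|f|)=2\mu(I_1)>0=|\w{T}(f)|$), so the statement as quoted carries a typo inherited from the source; your proposal avoids committing to a direction but also never supplies the step-function computation that makes the bullet true. Filling in that computation, and likewise verifying (rather than merely citing) that $m$ satisfies the inverse-limit commutation required of objects of $\scrN^1$, would complete an otherwise sound reconstruction.
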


The morphism $\w{T}_{(\kk,\mu(\II_{\itLamb}),m)}$ in Theorem \ref{thm:LLHZpre-2} equals to the direct limit $\underrightarrow{\lim}T_u$, where the direct system is
$(T_u, (\emb_{u_1u_2}: T_{u_1}\to T_{u_2})_{u_1\le u_2})$ which is induced by the direct system (\ref{formula:dir.sys}),
and the morphism $\emb_{u_1u_2}$, say an extension of the embedding of two $\itLamb$-homomorphisms $T_{u_1}$ and $T_{u_2}$, given by the restrictions $T_{u_2}|_{E_{u_1}}=T_{u_1}$
and $(\int_{\II_{\itLamb}}\cdot\dd\mu)|_{E_u} = T_{(\kk,\mu(\II_{\itLamb}),m)}|_{E_u} = T_u$.

The {\defines partial order ``$\preceq_f$'' (with respect to $f$)} of $\w{\bfS_{\tau}(\II_{\itLamb})}$ can be defined by
\[g \preceq_f h  \text{ if and only if }
    \bigg\Vert (\scrA^1)\int_{\II_{\itLamb}} (g-f) \dd\mu \bigg\Vert
\le \bigg\Vert (\scrA^1)\int_{\II_{\itLamb}} (h-f) \dd\mu \bigg\Vert. \]
By (\ref{limEu=S}), for any $f\in \w{\bfS_{\tau}(\II_{\itLamb})}$, we can, in the sense of the partial ordered ``$\preceq_f$'', find a monotone decreasing Cauchy sequence $\{f_{i}: E_{u_i}\to \kk\}_{i\in\NN}$ of step functions in $\bigcup_{u\in\NN} E_u$ such that $\underleftarrow{\lim}f_i = f$,
and since $\int_{\II_{\itLamb}}\cdot\dd\mu$ is an extension of $T_u$ satisfying
\[T_u = T_{(\kk,\mu(\II_{\itLamb}),m)}|_{E_u} = \bigg((\scrA^1)\int_{\II_{\itLamb}}\cdot\dd\mu\bigg)\bigg|_{E_u},\]
we have
\begin{align} \label{analy.lim}
  (\scrA^1)\int_{\II_{\itLamb}}f\dd\mu
= \w{T}_{(\kk,\mu(\II_{\itLamb}),m)}(\underleftarrow{\lim} f_i)
\mathop{=}\limits^{\spadesuit} \underleftarrow{\lim} T_{u_i}(f_i)
\ \Big(\mathop{=}\limits^{\clubsuit}  \lim_{i\to+\infty} T_{u_i}(f_i)\Big),
\end{align}
where the notation ``$\lim\limits_{i\to+\infty}$'' is the limit in analysis because all $T_{u_i}(f_i)$ are elements in the field $\kk$;
$\spadesuit$ holds since $\{|T_{u_i}(f_i-f)|\}_{i\in \NN}$ is a monotone decreasing Cauchy sequence (for $i\ge j$, $f_i\preceq_f f_j$ yields $|T_{u_i}(f_i-f)| \le |T_{u_j}(f_j-f)|$);
and $\clubsuit$ holds since $\{|T_{u_i}(f_i-f)|\}_{i\in \NN}$ is a Cauchy sequence in $\RR$.

\section{\sectcolor The relationships of two integrals on $\itLamb_1$ and $\itLamb_2$} \label{sect:3}

We always assume that $\kk$ is a complete field, the norm defined on $\kk$ is written as $|\cdot|$, containing fully ordered subset $\II=[c,d]_{\kk}$ such that there is an element $\xi\in(c,d)_{\kk}$ providing two order preserving bijections
\begin{center}
$\kappa_c: [c,d]_{\kk}\to [c,\xi]_{\kk}$ and $\kappa_d: [c,d]_{\kk}\to [\xi,d]_{\kk}$.
\end{center}
Then, for any finite-dimensional $\kk$-algebra $\itLamb$ and a given homomorphism $\tau:\itLamb\to\kk$ of algebras,
we can define $\II_{\itLamb}$, the $\sigma$-algebra $\Sigma(\II_{\itLamb})$, the measure $\mu_{\II_{\itLamb}}$ (=$\mu$ for simplification), $\bfS_{\tau}(\II_{\itLamb})$ (resp. normed $\itLamb$-module), $\w{\bfS_{\tau}(\II_{\itLamb})}$ (resp. Banach $\itLamb$-module), and two categories $\scrN^p_{\itLamb}$ and $\scrA^p_{\itLamb}$.
Furthermore, we can provide a categorization of Lebesgue integrations by the morphisms $T_{(\kk, \mu(\II_{\itLamb}), m)}$ and $\w{T}_{(\kk, \mu(\II_{\itLamb}), m)} = \displaystyle\int_{\II_{\itLamb}} \cdot \dd\mu$.
Naturally, we obtain the following questions.

\begin{question} [Question \ref{quest}] \rm \
For two finite-dimensional $\kk$-algebras $\itLamb_1$ and $\itLamb_2$ and two homomorphisms of algebras $\tau_1:\itLamb_1\to\kk$ and $\tau_2:\itLamb_2\to\kk$,
we can provide the definitions of integration $ \displaystyle\int_{\II_{\itLamb_1}}\cdot\dd\mu_{\II_{\itLamb_1}}$ and $ \displaystyle\int_{\II_{\itLamb_2}}\cdot\dd\mu_{\II_{\itLamb_2}}$ accordingly by similar way.
If $\itLamb_1 \cong \itLamb_2$, what is the connection between the two types of integrations mentioned above?
\end{question}

We will answer the above questions in this section.

\subsection{\sectcolor  Some lemmas}

Let $\itLamb_r$, $r\in\{1,2\}$, be two finite-dimensional $\kk$-algebras with $\dim_{\kk}\itLamb_r=n_r$,
and $B_r = \{b_{r1},\ldots,b_{rn_r}\}$ be the basis of $\itLamb_r$.
Let $\iso:\itLamb_1\to\itLamb_2$ be a map between two finite-dimensional $\kk$-algebras $\itLamb_1$ and $\itLamb_2$ in this section.

\subsubsection{\sectcolor The corresponding between $\bfS_{\tau_1}(\II_{\itLamb_1})$ and  $\bfS_{\tau_2}(\II_{\itLamb_2})$}
We use the following notation by Table \ref{tabel:notation} and the definition of integral normed (resp., Banach) module category in our paper.
\begin{enumerate}
  \item $\xi_r$ is an element in ${_{r}\!\II}=[c_r,d_r]_{\kk}$.

  \item $\II_{\itLamb_r}$, written as $\II_r$, is the subset of $\itLamb_r$ defined as $ \displaystyle\sum_{i=1}^n [c_r,d_r]_{\kk} b_i$.

  \item $\id_{\II_r}$, written as $\id_r$ for simplification, is the function $\II_r \to \{1\}$.

  \item $\tau_r: \itLamb_r\to\kk$ is a given homomorphism of $\kk$-algebras.

  \item $\mu_{{_{r}\!\II}}$ is the measure defined on the $\sigma$-algebra $\Sigma({_{r}\!\II})$ generated by
    \begin{center}
      $\{(c_r',d_r')_{\kk}, [c_r',d_r')_{\kk}, (c_r',d_r']_{\kk}, [c_r',d_r']_{\kk} \mid c_r\preceq c_r'\preceq d_r'\preceq d_r\}$.
    \end{center}
  \item $\mu_r$ is the measure, induced by $\mu_{\II_r}$, defined on the $\sigma$-algebra $\Sigma(\II_r)$ generated by $ \displaystyle \sum_{i=1}^n I_{ri} b_i$,
      where, for any $1\le i\le {n_r}$, $I_{ri}$ is one of the forms
      \begin{center}
        $(c_{ri},d_{ri})_{\kk}$, $(c_{ri},d_{ri}]_{\kk}$, $[c_{ri},d_{ri})_{\kk}$ and $[c_{ri},d_{ri}]_{\kk}$,
      \end{center}
      where $c_r\preceq c_{ri} \preceq d_{ri} \preceq d_r$.


  \item $\Thomo{r}: \sfct{r}_u \to \kk$ is the morphism defined as (\ref{def:Tu}).

  \item $\mhomo{r}:\kk^{\oplus_p 2^n} \to \kk$ is the $\itLamb_r$-homomorphism defined by the same method of the definition of (\ref{def:mhomo}).
\end{enumerate}
Then we can define $\scrN^p_r$ and $\scrA^p_r$ for each $\itLamb_r$, and,
by Theorem \ref{thm:LLHZpre-2} (2), there is an object in $\scrA^p_r$, which is of the form $(\kk, \mu_r(\II_r), \mhomo{r})$,
such that the homomorphism space $\Hom_{\scrN^p_r}((\bfS_{\tau_r}(\II_r), \id_r, \gamma_r), (\kk, \mu_r(\II_r), \mhomo{r}))$ contains a unique morphism $T_{(\kk, 1, \mhomo{r})}$ which can be extended to the unique morphism
\[\w{T}_{(\kk, \mu_r(\II_r), \mhomo{r})}: f \mapsto \w{T}_{(\kk, \mu_r(\II_r), \mhomo{r})}(f),\]
in $\Hom_{\scrA^p_r}((\w{\bfS_{\tau_r}(\II_r)}, \id_r, \w{\gamma}_r), (\kk, \mu_r(\II_r), \mhomo{r}))$.
In particular, if $p=1$, the above morphism is denoted by
\[(\scrA^1_r)\int_{\II_r} \cdot\dd\mu_r\]
in this paper.

Notice that $\II_r$ is a subset of $\itLamb_r$, we obtain a natural embedding $\emb_r: \II_r \to \itLamb_r$.
Thus, we get the following diagram
\[
\xymatrix{
  \II_1 \ar[r]^{\emb_1} \ar[d]_{\iso|_{\II_1}}
& \itLamb_1 \ar[d]^{\iso}  \\
  \II_2 \ar[r]_{\emb_2}
& \itLamb_2
}
\]
commutes. If $\iso$ is an injection (resp. a bijection), then it has a left inverse $\isoinv$ (resp. an inverse $\isoinv=\iso^{-1}$).
Therefore, $\isoinv|_{\II_2}\circled\iso = \id_1$ (resp.  $(\iso|_{\II_1})^{-1} = (\iso^{-1})|_{\II_2} = \isoinv|_{\II_2}$),
and any function $f:\II_1 \to \kk$ defined on $\II_1$ induces the function defined on $\II_2$ satisfying
\[ f \circled \isoinv = f \circled \isoinv|_{\II_2}: \ \
   \II_2
       \mathop{\longrightarrow}\limits^{\isoinv|_{\II_2}}
   \II_1
       \mathop{\longrightarrow}\limits^{f}
   \kk,
   \ \
x \mapsto f(\isoinv(x)). \]
Thus, if $\iso$ preserve step function, that is, $f\circled\iso$ is a step function lying in $\bfS_{\tau_2}(\II_2)$ for any $f\in\bfS_{\tau_1}(\II_1)$, then we obtain two correspondings
\[
\ol{\iso}: \bfS_{\tau_1}(\II_1) \to \bfS_{\tau_2}(\II_2), f \mapsto f\circled\isoinv|_{\im(\iso)}
\ \text{ and }\
\ol{\isoinv}: \bfS_{\tau_2}(\II_2) \to \bfS_{\tau_1}(\II_1), g \mapsto g\circled\iso.
\]

\begin{lemma} \label{lemma:inviso-iso}
The characteristics of mappings induced by the completions of the spaces $\bfS_{\tau_1}(\II_1)$ and $\bfS_{\tau_2}(\II_2)$ are as follows:
\begin{itemize}
  \item[\rm(1)] If $\iso$ is an injection, then $\w{\ol{\isoinv}}$, the map given by the completion $\w{\bfS_{\tau_2}(\II_2)}$ of $\bfS_{\tau_2}(\II_2)$, is also an injection.
      In this case, the left inverse $\isoinv$ of $\iso$ induces the left inverse $\ol{\isoinv}$ of $\ol{\iso}$
      and the left inverse $\w{\ol{\isoinv}}$ of $\w{\ol{\iso}}$.
  \item[\rm(2)] If $\iso$ is a bijection, then so is $\isoinv$.
    Let $\w{\ol{\iso}}$ and $\w{\ol{\isoinv}}$ are bijections given by the completions $\w{\bfS_{\tau_1}(\II_1)}$ and $\w{\bfS_{\tau_2}(\II_2)}$ of $\bfS_{\tau_1}(\II_1)$ and $\bfS_{\tau_2}(\II_2)$, respectively.
    Then $\ol{\iso}^{-1}=\ol{\isoinv}$ and $\w{\ol{\iso}}^{-1}=\w{\ol{\isoinv}}$.
\end{itemize}
\end{lemma}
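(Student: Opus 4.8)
The plan is to deduce both parts from two structural facts, thereby avoiding any hands-on manipulation of Cauchy sequences: first, the \emph{contravariant functoriality of precomposition}, which produces the relevant composition identities already at the level of the step-function modules $\bfS_{\tau_1}(\II_1)$ and $\bfS_{\tau_2}(\II_2)$; and second, the \emph{functoriality of the completion construction} $\w{(-)}$ recalled in Section \ref{sec:preliminaries}, which transports those identities to the Banach modules $\w{\bfS_{\tau_1}(\II_1)}$ and $\w{\bfS_{\tau_2}(\II_2)}$.

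First I would argue entirely inside $\bfS_{\tau_1}(\II_1)$ and $\bfS_{\tau_2}(\II_2)$. Since $\ol{\iso}(f)=f\circled\isoinv|_{\im(\iso)}$ and $\ol{\isoinv}(g)=g\circled\iso$ are both precomposition operators, associativity of composition gives, for every $f\in\bfS_{\tau_1}(\II_1)$,
\[ (\ol{\isoinv}\circled\ol{\iso})(f)=f\circled\big(\isoinv|_{\im(\iso)}\circled\iso\big). \]
When $\iso$ is an injection, its left inverse satisfies $\isoinv|_{\im(\iso)}\circled\iso=\mathrm{id}_{\II_1}$, so the right-hand side equals $f$; hence $\ol{\isoinv}\circled\ol{\iso}=\mathrm{id}_{\bfS_{\tau_1}(\II_1)}$, exhibiting $\ol{\isoinv}$ as a left inverse of $\ol{\iso}$. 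When $\iso$ is in addition a bijection we also have $\iso\circled\isoinv=\mathrm{id}_{\II_2}$, and the symmetric computation yields $\ol{\iso}\circled\ol{\isoinv}=\mathrm{id}_{\bfS_{\tau_2}(\II_2)}$, so that $\ol{\iso}$ and $\ol{\isoinv}$ are mutually inverse and $\ol{\iso}^{-1}=\ol{\isoinv}$.

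Next I would verify that $\ol{\iso}$ and $\ol{\isoinv}$ are genuine morphisms of normed $\itLamb$-modules, so that the completion functor applies to them. Their $\kk$-linearity and their compatibility with the $\tau$-twisted actions are immediate, because precomposition acts on the argument of a function and leaves the scalar $\tau(a)$ untouched. The substantive point is \emph{(uniform) continuity}: I need norm estimates of the form $\Vert\ol{\isoinv}(g)\Vert\le C\Vert g\Vert$ and $\Vert\ol{\iso}(f)\Vert\le C'\Vert f\Vert$, guaranteeing that each map carries Cauchy sequences to Cauchy sequences and respects the equivalence of Cauchy sequences used in Steps 1--3 of the completion. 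This is exactly where the standing hypothesis that $\iso$ preserves step functions, together with the way $\iso$ and $\isoinv$ compare the measures $\mu_1$ and $\mu_2$ that define the two norms, must be invoked; I expect this continuity estimate to be the only genuine obstacle, the rest of the argument being formal.

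Finally, with continuity in hand I would invoke functoriality of $\w{(-)}$: a continuous $\itLamb$-homomorphism $\phi$ extends uniquely to $\w{\phi}$ by $\w{\phi}([\{x_i\}_{i}])=[\{\phi(x_i)\}_{i}]$, with $\w{\psi\circled\phi}=\w{\psi}\circled\w{\phi}$ and $\w{\mathrm{id}}=\mathrm{id}$; equivalently one may use that $\bfS_{\tau_r}(\II_r)$ is dense in its completion and pass to limits. Applying this to the identities of the second paragraph gives $\w{\ol{\isoinv}}\circled\w{\ol{\iso}}=\mathrm{id}_{\w{\bfS_{\tau_1}(\II_1)}}$ in case (1), so that $\w{\ol{\isoinv}}$ is the claimed left inverse of $\w{\ol{\iso}}$ and, in particular, $\w{\ol{\iso}}$ is injective; and it gives both $\w{\ol{\isoinv}}\circled\w{\ol{\iso}}=\mathrm{id}$ and $\w{\ol{\iso}}\circled\w{\ol{\isoinv}}=\mathrm{id}$ in case (2), whence $\w{\ol{\iso}}$ is a bijection with $\w{\ol{\iso}}^{-1}=\w{\ol{\isoinv}}$.
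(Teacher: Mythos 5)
Your proposal takes essentially the same route as the paper: the paper's entire proof is your second paragraph — the one-line computation $\ol{\isoinv}\circled\ol{\iso}(f) = f\circled\isoinv\circled\iso = f$ on $\bfS_{\tau_1}(\II_1)$ — after which it passes to the completions with the single word ``naturally.'' The continuity estimate you rightly flag as the only substantive obstacle to applying completion-functoriality is left entirely implicit in the paper as well, so your write-up is if anything more explicit about what that extension step requires, without changing the argument.
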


\begin{proof}
We only prove (1), the proof of (2) is similar.
Let $f:\itLamb_1\to\kk$ be an arbitrary function in $\bfS_{\tau_1}(\II_1)$,
then $\ol{\isoinv}\circled\ol{\iso}(f) = f\circled\isoinv\circled\iso = f$,
that is, $\ol{\isoinv}\circled\ol{\iso}$ is the identity mapping defined on $\bfS_{\tau_1}(\II_1)$.
This induces that the left inverse of $\w{\ol{\iso}}$ is $\w{\ol{\isoinv}}$, naturally.
\end{proof}

\subsubsection{\sectcolor Measure preserving injection/bijection}

\begin{definition} \rm \label{def:fct}
We call $\iso$ is a {\defines measure preserving injection {\rm(}resp. bijection{\rm)}}, if it is an injection {\rm(}resp. a bijection{\rm)} such that the following two conditions hold.
\begin{itemize}
  \item[(1)] For any subset $S_1$ of $\itLamb_1$ lying in $\Sigma(\II_1)$,
    the image $S_2=\mathrm{Im}(\iso|_{S_1})$ of the restriction $\iso|_{S_1}:S_1\to S_2$ of $\iso$ is also a subset of $\itLamb_2$ lying in $\Sigma(\II_2)$.
  \item[(2)] There exists a function $\fct:\RR \to \RR$ such that the diagram
\begin{align}\label{comm.diagr in def:fct}
\hspace{-2cm}
 \xymatrix{
  \Sigma(\II_1) \ar[r]^{\mu_1} \ar[d]_{S \mapsto \im(\iso|_S)=\iso|_S(S)}
& \RR \ar[d]^{\fct}
  \\
  \Sigma(\II_2) \ar[r]_{\mu_2}
& \RR
}
\end{align}
commutes, that is, the equation
\begin{align}\label{comm.diagr.eq in def:fct}
(\fct\circled\mu_1)(S_1) = (\mu_2\circled\omega|_{S_1})(S_1)\ (=\mu_2(S_2))
\end{align}
holds for all $S_1\in\Sigma(\II_1)$.
\end{itemize}
\end{definition}

\begin{lemma} \label{lemma:measure}
Keeping the notations from Definition \ref{def:fct}, if $\iso$ preserves measure, then $\isoinv$ also preserve measure, and $\fct\circled\mu_1\circled\isoinv|_{\im(\iso)}$ is a measure $($note that in the case for $\iso$ being bijective, we have $\isoinv|_{\im(\iso)}=\isoinv$$)$.
\end{lemma}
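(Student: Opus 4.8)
The plan is to establish assertion (b) first and then read off the claim that $\isoinv$ preserves measure. The key observation is that $\fct\circled\mu_1\circled\isoinv|_{\im(\iso)}$ cannot be shown to be a measure by checking the measure axioms term by term: while $\mu_1\circled\isoinv|_{\im(\iso)}$ is countably additive, the function $\fct$ need not be additive, so $\fct$ applied to a countable sum does not split into a sum. Instead I would prove that this composite simply \emph{coincides} with $\mu_2$ on the measurable subsets of $\im(\iso)$, which is a measure by hypothesis, so nothing more need be verified.

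First I would record that on $\im(\iso)$ the left inverse behaves as the genuine set-theoretic inverse of $\iso$: for $S_2\in\Sigma(\II_2)$ with $S_2\subseteq\im(\iso)$ one has $\isoinv|_{\im(\iso)}(S_2)=\iso^{-1}(S_2)$, and in the bijective case $\isoinv|_{\im(\iso)}=\isoinv$ throughout. Before $\mu_1$ may be evaluated on $\isoinv|_{\im(\iso)}(S_2)$ I must verify the measurability clause, namely condition (1) of Definition \ref{def:fct} applied to $\isoinv$, i.e. $\isoinv|_{\im(\iso)}(S_2)\in\Sigma(\II_1)$. I would reduce this to generators: since $\iso^{-1}$ of a $\sigma$-algebra is again a $\sigma$-algebra and commutes with the generation operation, it suffices to check that the inverse image of each generating box $\sum_i I_{2i}b_{2i}$ of $\Sigma(\II_2)$ lies in $\Sigma(\II_1)$; as $\iso$ carries the generating boxes of $\II_1$ to those of $\II_2$, this inverse image is again a box and hence measurable. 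The injectivity half of Lemma \ref{lemma:inviso-iso} serves as the function-space counterpart that underwrites this step.

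With measurability available, the core computation is immediate. Setting $S_1:=\isoinv|_{\im(\iso)}(S_2)=\iso^{-1}(S_2)$, injectivity of $\iso$ gives $\iso|_{S_1}(S_1)=S_2$, and the measure-preservation identity \eqref{comm.diagr.eq in def:fct} of $\iso$ yields
\[
  \big(\fct\circled\mu_1\circled\isoinv|_{\im(\iso)}\big)(S_2)
  =\fct\big(\mu_1(S_1)\big)
  =\mu_2\big(\iso|_{S_1}(S_1)\big)
  =\mu_2(S_2).
\]
Thus $\fct\circled\mu_1\circled\isoinv|_{\im(\iso)}$ agrees with the restriction of $\mu_2$ to the measurable subsets of $\im(\iso)$, which proves assertion (b). For the first assertion, the same identity reads $\fct\big(\mu_1(\isoinv(S_2))\big)=\mu_2(S_2)$, so $\isoinv$ fits the commuting diagram of Definition \ref{def:fct} with the roles of $\itLamb_1$ and $\itLamb_2$ interchanged, the witnessing function $\dfct$ being an inverse of $\fct$ on the range of $\mu_1\circled\isoinv$; in the bijective case $\fct$ is invertible and one may take $\dfct=\fct^{-1}$ honestly.

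I expect the measurability clause (condition (1) for $\isoinv$) to be the main obstacle, because for a general measure-preserving injection it is not formal that inverse images of measurable sets are measurable, and one genuinely needs the compatibility of $\iso$ with the generating boxes. The secondary subtlety is the well-definedness of the witnessing function $\dfct$ for $\isoinv$, which amounts to injectivity of $\fct$ on the relevant range; this is where the bijectivity hypothesis (or the standard-form description of $\iso$) is used, whereas assertion (b) itself needs no such inversion precisely because it is proved by identifying the composite with $\mu_2$ rather than by passing through $\fct^{-1}$.
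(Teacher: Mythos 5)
Your proposal is correct and follows essentially the same route as the paper: the paper likewise avoids verifying the measure axioms directly and instead computes $(\fct\circled\mu_1\circled\isoinv|_{\im(\iso)})(S_2)=\fct(\mu_1(\isoinv(S_2)))=\mu_2(S_2)$ via \eqref{comm.diagr.eq in def:fct}, concluding that the composite \emph{is} $\mu_2$ and hence a measure. The only difference is that you spell out two points the paper's proof asserts tersely --- the measurability clause $\isoinv|_{\im(\iso)}(S_2)\in\Sigma(\II_1)$ (which the paper dismisses ``by $\iso$ preserving measure'') and the witnessing function for $\isoinv$ (which the paper does not discuss) --- so your version is, if anything, more complete.
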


\begin{proof}
We only prove this lemma in the case for $\iso$ being injective, the case for $\iso$ being bijective is similar.

Let $\iso_{\mathrm{L}}^{-1}$ be the left inverse $\isoinv$ of $\iso$.
For any subset $S_2$ of $\itLamb_2$ lying in $\Sigma(\II_2)$, we have $\iso_{\mathrm{L}}^{-1}|_{\im(\iso)}(S_2) = \isoinv|_{\im(\iso)}(S_2) \in \Sigma(\II_1)$ by $\iso$ preserving measure. Then we have
\[(\fct\circled\mu_1\circled\isoinv|_{\im(\iso)})(S_2) = \fct(\mu_1(\isoinv(S_2))) = \fct(\mu_1(\iso_{\mathrm{L}}^{-1}(S_2)))\]
which equals to $\mu_2(S_2)$ by (\ref{comm.diagr in def:fct}).
Thus, $\fct\circled\mu_1\circled\isoinv|_{\im(\iso)}=\mu_2$.
Since $\mu_2$ is a measure, so is $\fct\circ\mu_1\circ\isoinv|_{\im(\iso)}$.
\end{proof}

\subsection{\sectcolor The relationships of two integrals} \label{subsect:iso}

Now we provide the relationships of the integrals of functions defined on $\itLamb_1$ and that of function defined on $\itLamb_2$.

\subsubsection{\sectcolor In the case of $\iso$ being an
injection/bijection} \label{subsubsect:bijection}

\begin{theorem} \label{thm:main1}
Let $f$ and $g$ be two functions lying in $\w{\bfS_{\tau_1}(\II_1)}$ and $\w{\bfS_{\tau_2}(\II_2)}$ such that $g=\ol{\iso}(f)=f\circled\isoinv$, respectively.
Then $\fct\circled\mu_1$ is a measure defined on $\Sigma(\II_1)$ and
\begin{align}\label{formula:main1}
  \w{T}_{(\kk, \fct(\mu_1(\II_1)), \ \mhomo{1})}(f)
= (\scrA^1_1)\int_{\II_1} f \dd (\fct\circled\mu_1)
= (\scrA^1_2)\int_{\II_2} g \dd\mu_2
= \w{T}_{(\kk,\ \mu_2(\II_2), \ \mhomo{2})}(g).
\end{align}
\end{theorem}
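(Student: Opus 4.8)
The plan is to prove (a) by transferring the measure axioms from $\mu_2$ along the injection $\iso$, and (b) by identifying both integrals with limits of step-function sums, exactly as recorded in \eqref{analy.lim}, and then matching those sums term by term through the correspondence $\ol\iso$.

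First I would establish (a). By \eqref{comm.diagr.eq in def:fct} we have $(\fct\circled\mu_1)(S_1)=\mu_2(\iso|_{S_1}(S_1))$ for every $S_1\in\Sigma(\II_1)$, and by Definition~\ref{def:fct}(1) the set $\iso(S_1)$ lies in $\Sigma(\II_2)$. Non-negativity and $(\fct\circled\mu_1)(\varnothing)=\mu_2(\varnothing)=0$ are immediate. For countable additivity, given pairwise disjoint $S_1^{(j)}\in\Sigma(\II_1)$, the injectivity of $\iso$ guarantees that the images $\iso(S_1^{(j)})$ are again pairwise disjoint and that $\iso\big(\bigcup_j S_1^{(j)}\big)=\bigcup_j\iso(S_1^{(j)})$; applying $\mu_2$ and using its $\sigma$-additivity then gives $(\fct\circled\mu_1)\big(\bigcup_j S_1^{(j)}\big)=\sum_j(\fct\circled\mu_1)(S_1^{(j)})$. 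Hence $\fct\circled\mu_1$ is a measure on $\Sigma(\II_1)$. This is the mirror image (read through $\iso$ rather than $\isoinv$) of Lemma~\ref{lemma:measure}.

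Next I would set up the correspondence underlying (b). Equip $\bfS_{\tau_1}(\II_1)$ with the measure $\fct\circled\mu_1$, so that its $\scrA^1_1$-data $(\Thomo{1}_u$, $\mhomo{1}$, norm$)$ are all built from $\fct\circled\mu_1$, and keep $\mu_2$ on $\bfS_{\tau_2}(\II_2)$. The key elementary computation is that for a basic block $I\subseteq\II_1$ one has $\id_I\circled\isoinv=\id_{\iso(I)}$ together with $\mu_2(\iso(I))=(\fct\circled\mu_1)(I)$; consequently $\ol\iso$ carries a step function $\sum_j k_j\id_{I_j}\in \sfct{1}_u$ to $\sum_j k_j\id_{\iso(I_j)}$, and is norm-preserving for these two choices of measure. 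By Lemma~\ref{lemma:inviso-iso} the map $\ol\iso$ (resp.\ its completion $\w{\ol\iso}$) is injective with left inverse $\ol\isoinv$ (resp.\ $\w{\ol\isoinv}$), so it matches the filtration $\sfct{1}_u$ of $\w{\bfS_{\tau_1}(\II_1)}$ with the corresponding filtration of $\w{\bfS_{\tau_2}(\II_2)}$ and is compatible with the order structures $\preceq_f$ and $\preceq_g$. I would then fix, as in \eqref{analy.lim}, a monotone decreasing Cauchy sequence $\{f_i\}_{i\in\NN}$ of step functions with $\underleftarrow\lim f_i=f$, set $g_i=\ol\iso(f_i)$, and note that $\{g_i\}$ is monotone decreasing, Cauchy, and satisfies $\underleftarrow\lim g_i=g$. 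Writing $f_i=\sum_j k_{ij}\id_{I_{ij}}$, the defining identity of $\fct$ gives term by term
\[
\sum_j k_{ij}(\fct\circled\mu_1)(I_{ij})=\sum_j k_{ij}\mu_2(\iso(I_{ij})),
\]
i.e.\ the algebra-$1$ map $\Thomo{1}_{u_i}$ (built from $\fct\circled\mu_1$) at $f_i$ equals the algebra-$2$ map $\Thomo{2}_{u_i}$ at $g_i$. Passing to the limit via \eqref{analy.lim} on each side yields
\begin{align*}
(\scrA^1_1)\int_{\II_1}f\,\dd(\fct\circled\mu_1)
&=\lim_{i\to+\infty}\sum_j k_{ij}(\fct\circled\mu_1)(I_{ij})\\
&=\lim_{i\to+\infty}\sum_j k_{ij}\mu_2(\iso(I_{ij}))
=(\scrA^1_2)\int_{\II_2}g\,\dd\mu_2,
\end{align*}
and the two outer equalities in \eqref{formula:main1} are the notational identifications from Theorem~\ref{thm:LLHZpre-2}, namely $\w{T}_{(\kk,\fct(\mu_1(\II_1)),\,\mhomo{1})}(f)=(\scrA^1_1)\int_{\II_1}f\,\dd(\fct\circled\mu_1)$ and $(\scrA^1_2)\int_{\II_2}g\,\dd\mu_2=\w{T}_{(\kk,\,\mu_2(\II_2),\,\mhomo{2})}(g)$.

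The main obstacle I expect is the claim that $\w{\ol\iso}$ sends a monotone decreasing Cauchy sequence approximating $f$ to one approximating $g$, that is, that $\w{\ol\iso}$ genuinely intertwines the order structures $\preceq_f,\preceq_g$ and the direct-limit (filtration) structures $\underrightarrow\lim \sfct{1}_u$ and $\underrightarrow\lim \sfct{2}_u$; this is precisely where the norm-preservation of the third paragraph must be combined carefully with Lemma~\ref{lemma:inviso-iso}, rather than merely invoked. A secondary point to dispatch is the region mismatch in the genuinely injective (non-bijective) case, where $\iso(\II_1)$ may be a proper subset of $\II_2$: here $g=f\circled\isoinv|_{\im(\iso)}$ must be read as extended by $0$ on $\II_2\setminus\iso(\II_1)$, and one has to verify that this contributes no $\mu_2$-mass, so that the total-measure data $\fct(\mu_1(\II_1))=\mu_2(\iso(\II_1))$ and $\mu_2(\II_2)$ label the same object $\kk$ and the $\mu_2$-integral of $g$ over $\II_2$ coincides with its integral over $\iso(\II_1)$.
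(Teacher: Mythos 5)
Your proposal follows essentially the same route as the paper's own proof: part (a) transfers the measure axioms of $\mu_2$ along the injection $\iso$ exactly as the paper does (the mirror of Lemma \ref{lemma:measure}), and part (b) approximates $f$ by a Cauchy sequence of step functions as in \eqref{analy.lim}, matches $\Thomo{1}_{u_i}(f_i)$ with $\Thomo{2}_{u_i}(g_i)$ term by term via $(\fct\circled\mu_1)(I_{ij})=\mu_2(\iso(I_{ij}))$, and passes to the limit. The two subtleties you flag at the end --- that $\w{\ol{\iso}}$ must genuinely intertwine the order and filtration structures, and that in the non-bijective case $\II_2\setminus\iso(\II_1)$ must carry no $\mu_2$-mass for the totals $\fct(\mu_1(\II_1))$ and $\mu_2(\II_2)$ to be compatible --- are passed over silently in the paper's proof as well, so your treatment is, if anything, the more careful one.
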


\begin{proof}
The equation (\ref{def:fct}) yields $\fct(\mu_1(S))=\mu_2(\iso(S))\ge 0$ holds for all $S\in\Sigma(\II_1)$
and $\fct(\mu_1(\varnothing))=\mu_2(\iso(\varnothing)) = \mu_2(\varnothing) = 0$.
Now we show the countable additivity of $\fct\circled\mu_1$.
Take $\{X_i\}_{i\in\NN}$ is a family of subsets of $\itLamb_1$ lying in $\Sigma(\II_1)$ with $X_i\cap X_j=\varnothing$ ($i\ne j$).
Then we have
\begin{align*}
 \fct\bigg(\mu_1\bigg(\bigcup_{i=1}^{+\infty} X_i\bigg)\bigg)
& \mathop{=}\limits^{\spadesuit} \mu_2\bigg(\iso\bigg(\bigcup_{i=1}^{+\infty} X_i\bigg)\bigg)
\mathop{=}\limits^{\clubsuit} \mu_2\bigg(\bigcup_{i=1}^{+\infty} \iso(X_i)\bigg) \\
& \mathop{=}\limits^{\heart} \sum_{i=1}^{+\infty} \mu_2(\iso(X_i))
\mathop{=}\limits^{\spadesuit} \sum_{i=1}^{+\infty} \fct(\mu_1(X_i))
\end{align*}
as required, where two $\spadesuit$ are given by (\ref{def:fct}), $\clubsuit$ is given by $X_i\cap X_j=\varnothing$ ($i\ne j$), and $\heart$ is given by the injectivity of $\iso$ the countable additivity of the measure $\mu_2$.

Next, we prove (\ref{formula:main1}). Notice that we have a Cauchy sequence
$\{f_i\}_{i\in\NN} = \{f_i:\sfct{1}_{u_i} \to \kk\}_{i\in\NN}$ in $\bigcup\limits_{u\in\NN}\sfct{1}_u$.
Thus, by Lemma \ref{lemma:inviso-iso} (1), have the Cauchy sequence
\begin{center}
  $\{g_i\}_{i\in\NN} = \{g_i=f_i\circled\isoinv:\sfct{2}_{u_i} \to \kk\}_{i\in\NN}$
\end{center}
in $\bigcup\limits_{u\in\NN}\sfct{2}_u$ provided by $\{f_i\}_{i\in\NN}$
such that
\[ f = \underleftarrow{\lim} f_i, \ \
   g = \underleftarrow{\lim} g_i \text{ \ \ and \ \ }
   (\scrA^1_2)\int_{\II_2} g\dd\mu_2 = \underleftarrow{\lim}\ \Thomo{2}_{u_i}(g_i)
\]
hold, see (\ref{analy.lim}).
Then, assume that, for each $i\in\NN$, $f_i$ which is of the form $\sum_{j=1}^{t_i} k_{ij}\id_{I_{ij}}$ ($k_{ij}\in\kk$, $I_{ij}\cap I_{ij'}=\varnothing$ holds for all $1\le j\ne j'\le t_i$), we have
\begin{align*}
  (\scrA^1_2)\int_{\II_2} g\dd\mu_2
& = (\scrA^1_2)\int_{\II_2} f \circled \isoinv \dd\mu_2
  = \underleftarrow{\lim}\ \Thomo{2}_{u_i}(f_i\isoinv) \\
& = \underleftarrow{\lim}\ \Thomo{2}_{u_i}\bigg(\sum_{j=1}^{t_i} k_{ij} (\id_{I_{ij}}\circled\isoinv)\bigg) \\
& = \underleftarrow{\lim}\ \sum_{j=1}^{t_i} k_{ij} \
    \Thomo{2}_{u_i}(\id_{I_{ij}} \circled \text{id}_{\II_1}|_{I_{ij}} \circled\isoinv|_{\im(\iso)}) \\
& = \underleftarrow{\lim}\ \sum_{j=1}^{t_i} k_{ij}\ \Thomo{2}_{u_i}(\id_{\iso(I_{ij})}) \\
& = \underleftarrow{\lim}\ \sum_{j=1}^{t_i} k_{ij}\mu_2(\iso(I_{ij})).
\end{align*}
Thus, by $\mu_2 = \fct\circled\mu_1\circled(\iso|_{S_1})^{-1} = \fct\circled\mu_1\circled\isoinv|_{S_2}$
and $\iso(I_{ij})\subseteq S_2$, we obtain
\begin{align*}
    (\scrA^1_2)\int_{\II_2} (f \circled \isoinv) \dd\mu_2
& = \underleftarrow{\lim}\ \sum_{j=1}^{t_i} k_{ij}\mu_2(I_{ij}) \\
& = \underleftarrow{\lim}\ \sum_{j=1}^{t_i} k_{ij}\fct\big(\mu_1(\isoinv|_{\iso(I_{ij})}(\iso(I_{ij})))\big) \\
& \mathop{=}\limits^{\diamo}\
    (\scrA^1_1)\int_{\II_1} f \dd (\fct\circled\mu_1\circled\isoinv|_{\im(\iso)}\circled\iso) \\
& = (\scrA^1_1)\int_{\II_1} f \dd (\fct\circled\mu_1)
\end{align*}
as required, where $\iso$ (resp. $\isoinv$) can be seen as a map sending each set $S$ lying in the $\sigma$-algebra $\Sigma(\II_1)$ (resp. $\Sigma(\II_2)$) to the set $\iso|_S(S)$ (resp. $\isoinv|_S(S)$) belong to $\Sigma(\II_2)$ (resp. $\Sigma(\II_1)$), 
and, by Lemma \ref{lemma:measure}, $\diamo$ holds since $\fct\circled\mu_1\circled\isoinv|_{\im(\iso)}$ is a measure defined on $\Sigma(\II_2)$ (and so is $\fct \circled \mu_1 \circled \isoinv|_{\im(\iso)} \circled \iso$ $=$ $\fct\circled\mu_1$).
\end{proof}

\subsubsection{\sectcolor In the case of $\iso$ being an isomorphism} \label{subsubsect:iso}
Now, we assume that $\iso: \itLamb_1\to\itLamb_2$ is an isomorphism of algebras in this section.
Then, for each $r\in\{1,2\}$, there is a basis of $\itLamb_r$, written as $B_{\itLamb_r}=\{b_{r1}, \cdots, b_{rn_r}\}$, such that, for all $1\le i,j\le n_1=n_2$, the following statements hold.
\begin{itemize}
  \item[(1)] $\iso(b_{1i}) = b_{2i}$;
  \item[(2)] $\iso(b_{1i}b_{1j}) = b_{2i}b_{2j}$.
\end{itemize}
The correspondence $\iso$ can be written as the standard form
\[ \itLamb_1
   \ni \sum_{i=1}^n k_ib_{1i}
   \
   \begin{smallmatrix}
   \To{\iso} \\
   \oT{\isoinv}
   \end{smallmatrix}
   \
   \sum_{i=1}^n k_ib_{2i}
   \in \itLamb_2. \]

If $\mu_r$ is the measure defined on $\Sigma(\II_r)$ induced by the measure $\mu_{{_{r}\!\II}}: \Sigma({_{r}\!\II}) \to \RR$, where ${_{r}\!\II}=[c_r,d_r]_{\kk}$ and $\II_r = \prod_{i=1}^n [c_r,d_r]_{\kk}b_{ri}$,
then we can not \checks{discriminate} between the measures $\mu_1$ and $\mu_2$ in the case of ${_{1}\!\II} = {_{2}\!\II}$.
Thus, one can check that $\iso$ is a bijection preserve measure. We obtain the following result.

\begin{corollary} \label{coro:main2}
Let $f$ and $g$ be two functions lying in $\w{\bfS_{\tau_1}(\II_1)}$ and $\w{\bfS_{\tau_2}(\II_2)}$ such that $g=\ol{\iso}(f)$, respectively. If $\iso$ is an isomorphism, then
\[ (\scrA^1_1)\int_{\II_1} f \dd(\fct\circled\mu) = (\scrA^1_2)\int_{\II_2} g \dd\mu \]
\end{corollary}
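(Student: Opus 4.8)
The plan is to derive Corollary \ref{coro:main2} as a direct specialization of Theorem \ref{thm:main1}, exploiting the hypotheses that $\iso$ is an isomorphism and that the two index intervals coincide, i.e. ${_{1}\!\II} = {_{2}\!\II}$. First I would observe that an isomorphism of finite-dimensional $\kk$-algebras is in particular a bijection, so Lemma \ref{lemma:inviso-iso} (2) applies and both $\ol{\iso}$ and its completion $\w{\ol{\iso}}$ are bijections with inverses $\ol{\isoinv}$ and $\w{\ol{\isoinv}}$. Thus the relation $g = \ol{\iso}(f) = f \circled \isoinv$ is well-defined and symmetric in the sense required by the theorem.

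The key step is to verify that $\iso$ is a \emph{measure-preserving} bijection in the sense of Definition \ref{def:fct}, so that Theorem \ref{thm:main1} becomes applicable. Using the standard form of $\iso$ sending $\sum_{i=1}^n k_i b_{1i}$ to $\sum_{i=1}^n k_i b_{2i}$, I would check condition (1) of Definition \ref{def:fct}: a generating block $\sum_{i=1}^n I_i b_{1i}$ of $\Sigma(\II_1)$ is carried by $\iso$ to $\sum_{i=1}^n I_i b_{2i}$, which lies in $\Sigma(\II_2)$ precisely because the intervals $I_i \subseteq {_{1}\!\II} = {_{2}\!\II}$ are unchanged coordinatewise; hence $\iso$ maps $\Sigma(\II_1)$ into $\Sigma(\II_2)$. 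For condition (2), the hypothesis ${_{1}\!\II}={_{2}\!\II}$ forces $\mu_{{_{1}\!\II}}=\mu_{{_{2}\!\II}}$ and therefore the induced product measures agree, $\mu_1(\sum_i I_i b_{1i}) = \prod_i \mu_{{_{1}\!\II}}(I_i) = \prod_i \mu_{{_{2}\!\II}}(I_i) = \mu_2(\iso(\sum_i I_i b_{1i}))$. This shows that the commuting square \eqref{comm.diagr in def:fct} holds with $\fct = 1_{\RR^{\ge 0}}$ the identity, so $\fct \circled \mu_1 = \mu_1 = \mu_2$ and $\iso$ preserves measure.

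Having established that $\iso$ is a measure-preserving bijection with $\fct = \mathrm{id}$, the conclusion follows immediately by substituting into \eqref{formula:main1} of Theorem \ref{thm:main1}: the left integral $(\scrA^1_1)\int_{\II_1} f \dd(\fct\circled\mu_1)$ becomes $(\scrA^1_1)\int_{\II_1} f \dd\mu$ (writing $\mu$ for the common measure), and the right integral $(\scrA^1_2)\int_{\II_2} g \dd\mu_2$ becomes $(\scrA^1_2)\int_{\II_2} g \dd\mu$, yielding the claimed equality. The main obstacle, such as it is, lies not in deep argumentation but in the bookkeeping of condition (2): I would need to be careful that the identification ${_{1}\!\II}={_{2}\!\II}$ genuinely propagates through the product construction of $\mu_r$ on $\Sigma(\II_r)$ and through the action of $\iso$ on arbitrary members of the $\sigma$-algebra (not merely the generating blocks), which is handled by the countable additivity already verified in the proof of Theorem \ref{thm:main1}. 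Once measure preservation is confirmed, the corollary is a clean corollary in the literal sense, requiring no new analytic input.
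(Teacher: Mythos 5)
Your proposal is correct and follows essentially the same route as the paper: the paper's proof is the one-line observation that the statement is a direct corollary of Theorem~\ref{thm:main1}, with the measure-preservation of $\iso$ (via the standard form and the identification ${_{1}\!\II}={_{2}\!\II}$, so that $\mu_1$ and $\mu_2$ coincide) asserted as ``one can check'' in the paragraph preceding the corollary. You simply fill in that verification explicitly, checking Definition~\ref{def:fct} on the generating blocks with $\fct$ the identity, which is a welcome but not divergent elaboration.
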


\begin{proof}
It is the direct corollary of Theorem \ref{thm:main1}.
\end{proof}

Now we provide some examples for Corollary \ref{coro:main2}.

\begin{example} \label{examp} \rm
Recall that a {\defines quiver} is a pair $(\Q_0, \Q_1)$ with two functions $\s:\Q_1\to\Q_0$ and $\t:\Q_1\to\Q_0$, written as $\Q$, where $\Q_0$ is called the {\defines vertex set} whose elements are called {\defines vertices};
$\Q_1$ is called the {\defines arrow set} whose elements are called {\defines arrows},
and $\s$ and $\t$ are maps sending each arrow $\alpha\in\Q_1$ to its starting point and ending point, respectively.
Then the $\kk$-linear space $\mathrm{span}_{\kk}\{\wp \mid \wp \text{ is a path on } \Q\}$ is a $\kk$-algebra,
where the multiplication is induced by the composition of paths, that is, for any two paths $\wp_1$ and $\wp_2$,
if $\t(\wp_1)=\s(\wp_2)$, then the product of $\wp_1$ and $\wp_2$ is defined as the composition $\wp_1\wp_2$;
otherwise, i.e., in the case of $\t(\wp_1)\ne\s(\wp_2)$, we define the product of $\wp_1$ and $\wp_2$ is zero,
cf. \cite[Chap II]{ASS2006}. Respectively, we provide two algebras $A$ and $B$ shown in the item (1) and (2) of this Example, and show that $A\cong B$, and, moreover, provide two integrals of functions defined on $A$ and $B$.

(1) Let $A=\kk\Q/\I$ be a finite-dimensional algebra given by $\kk=\RR$, $\Q=\xymatrix{ 1 \ar@(ul,dl)_{a} \ar[r]_{b} & 2 }$ and the ideal $\I=\langle a-a^2 \rangle$. Then
\begin{align*}
 A & = \kk e_1 + \kk e_2 + \kk a + \kk b + \kk ab + \langle a^2-a \rangle
\end{align*}
Take $\II=[0,1]$, then we have the one-to-one correspondence
\[ \II_A = [0,1]_A \mathop{\longleftrightarrow}\limits^{1-1}
   [0,1]e_1 + [0,1]e_2 + [0,1]a + [0,1]b + [0,1]ab
=: [0,1]^{\dag}.\]
Now, define $f: \II_A \to \RR$ induced by the map $[0,1]^{\dag} \to \RR$ sending each element $k_1e_1 + k_2e_2 + k_3a + k_4b + k_5ab$ ($k_i\in\kk$, $1\le i\le 5$) to $k_1+k_2+k_4$.
If $\scrA_A^1$ satisfies \checks{(L1), (L3)--(L6)}, then we have
\begin{align} \label{examp:int 1}
  (\scrA_A^1) \int_{\II_A} f \dd\mu
 = \iiint_{[0,1]^{\times 3}} (k_1+k_2+k_4)\dd k_1 \dd k_2 \dd k_4
 = \frac{3}{2}.
\end{align}

(2) Let $B=\kk\mathcal{P}$ be a finite-dimensional algebra given by $\kk=\RR$, $\mathcal{P}=\xymatrix{ 1\ar[r]^{\alpha} & 2 & 3\ar[l]_{\beta}}$. Then
\begin{align*}
 B = \kk\varepsilon_1 + \kk\varepsilon_2 + \kk\varepsilon_3 + \kk \alpha + \kk \beta.
\end{align*}
Notice that we have the isomorphism $\iso: A \mathop{\longrightarrow}\limits^{\cong} B$ which is defined by
\begin{align*}
         & k_1e_1 + k_2e_2 + k_3a + k_4b + k_5ab \\
      =\ & (k_1+k_3)a + k_2e_2 + k_1(e_1-a) + (k_4+k_5)ab + k_4(e_1-a)b   \\
\mapsto\ & (k_1+k_3)\varepsilon_1 + k_2\varepsilon_2 + k_1\varepsilon_3 + (k_4+k_5)\alpha + k_4\beta,
\end{align*}
and its inverse $\iso^{-1}=\isoinv: B \to A$ is
\begin{align*}
        r_1\varepsilon_1 + r_2\varepsilon_2 + r_3\varepsilon_3 + r_4\alpha + r_5\beta
\mapsto r_3e_1 + r_2e_2 + (r_1-r_3)a + r_5b + (r_4-r_5)ab
\end{align*}
Take $\II=[0,1]$, then we have the one-to-one correspondence
\[ \II_B = [0,1]_B \mathop{\longleftrightarrow}\limits^{1-1}
   [0,1]\varepsilon_1 + [0,1]\varepsilon_2 + [0,1]\varepsilon_3 + [0,1]\alpha + [0,1]\beta; \]
the transformation of integration regions
\begin{align*}
 \isoinv(\II_B) &
= \{ r_3e_1 + r_2e_2 + (r_1-r_3)a + r_5b + (r_4-r_5)ab
\mid r_1, r_2, r_3, r_4, r_5 \in [0,1] \} \\
& \subseteq [0,1]e_1+[0,1]e_2+[-1,1]a+[0,1]b+[-1,1]ab \\
& =: I \ (\subseteq A)
\end{align*}
and the function $f\circled\isoinv: \II_B\to \RR$, the function given in the instance (1) of this example, is defined by
\[ r_1\varepsilon_1 + r_2\varepsilon_2 + r_3\varepsilon_3 + r_4\alpha + r_5\beta \mapsto r_3 + r_2 + r_5. \]
Thus, if $\scrA_B^1$ satisfies L-condition, then we have
\begin{align} \label{examp:int 2}
    (\scrA_B^1) \int_{\II_B} (f\circled\isoinv) \dd\mu \nonumber
& = \iiint_{[0,1]^{\times 3}} (r_3+r_2+r_5)\dd r_3 \dd r_2 \dd r_5 \\
& = \frac{3}{2} = (\scrA_A^1) \int_{\II_A} f\dd\mu,
\end{align}
here, $\fct=\id_{\RR}$. Then by (\ref{examp:int 1}) and (\ref{examp:int 2}), we have
\begin{align}\label{examp:int 3}
  (\scrA_B^1) \int_{\II_B} (f\circled\isoinv) \dd\mu
= (\scrA_A^1) \int_{I} f\dd(\fct\circled\mu)
\mathop{=}\limits^{\star} (\scrA_A^1) \int_{\II_A} f\dd(\fct\circled\mu),
\end{align}
where ``$\star$'' holds since $f=f|_{[0,1]e_1+[0,1]e_2+[0,1]b}$ can be seen as the restriction of the function
\begin{center}
$f: A \to \kk$, (resp. $f: I \to\kk$)

$x=k_1e_1+k_2e_2+k_3a+k_4b+k_5ab \mapsto
\begin{cases}
k_1+k_2+k_5, & x\in \II_A;  \\
0, & \text{otherwise}
\end{cases}$
\end{center}
defined on $A$ (resp. $I$).
\end{example}

\begin{remark} \rm
The isomorphism $\iso: A \to B$ given in Example \ref{examp} induces the following relationship
\begin{align*}
& f|_{\kk e_1+\kk e_2 + \kk b}:
  k_1e_1+k_2e_2+k_4b \mapsto k_1+k_2+k_4 \\
\ \mathop{\longrightarrow}\limits^{\isoinv} \
& f\circled\isoinv|_{\kk\varepsilon_2+\kk\varepsilon_3+\kk\beta}:
  r_2\varepsilon_2 + r_3\varepsilon_3 + r_5\beta \mapsto r_2+r_3+r_5
\end{align*}
which can be seen as a composition of a map sending $k_1e_1+k_2e_2+k_4b$ to $r_2\varepsilon_2 + r_3\varepsilon_3 + r_5\beta$ and a $\kk$-linear map $w$ sending $r_1\varepsilon_1+ r_2\varepsilon_2 + r_3\varepsilon_3+ r_4\alpha + r_5\beta$ to $r_2+r_3+r_5$.
In this case, as a non-rigorous perspective, the integration of the integral (\ref{examp:int 1}) can be understood as a composition of the vector valued integral
\[(\mathrm{B})\int_{W_{u,v}} (k_1e_1+k_2e_2+k_4b) \otimes_{\kk} \dd A\]
and the $\kk$-linear map $\phi$ sending each element $k_1e_1+k_2e_2+k_3a+k_4b+k_5ab$ to $k_1+k_2+k_3+k_4+k_5$ in the case for $u=0<v=1$, where:
\begin{itemize}
  \item $\displaystyle(\mathrm{B})\int$ is a Bochner integration; 
  \item $W_{u,v}=[u,v]e_1+[u,v]e_2+[-(v-u),v-u]a+[u,v]b+[-(v-u),v-u]ab$,
    here, $u<v$, and one can check $W_{0,1}=\isoinv(\II_B)$;
  \item $\dd A = (\dd k_1\cdot e_1)\wedge(\dd k_2\cdot e_2)\wedge(\dd k_3\cdot a)\wedge
     (\dd k_4\cdot b)\wedge(\dd k_5\cdot ab)$ (``$\wedge$'' represents exterior differential form);
  \item and the following equation
\begin{align*}
   & (\mathrm{B})\int_{W_{u,v}} (k_1e_1+k_2e_2+k_4b) \otimes_{\kk} \dd A \\
=\ &  (v-u)^4\cdot(\mathrm{L})\int_u^v k_1e_1\cdot \dd k_1 \cdot e_1 \\
   &+ 16(v-u)^4\cdot(\mathrm{L})\int_{u-v}^{v-u} k_1e_1\cdot \dd k_1 \cdot a \\
   &+ 16(v-u)^4\cdot(\mathrm{L})\int_{u-v}^{v-u} k_1e_1\cdot \dd k_1 \cdot ab \\
   &+ (v-u)^4\cdot(\mathrm{L})\int_u^v k_2e_2\cdot \dd k_2 \cdot e_2 \\
   &+ (v-u)^4\cdot(\mathrm{L})\int_u^v k_4b \cdot \dd k_4 \cdot e_2 \\
=\ & (v-u)^4\cdot\frac{v^2-u^2}{2}e_1 + (v-u)^4\cdot\frac{v^2-u^2}{2}e_2 + 0 a \\
   &+ (v-u)^4\cdot\frac{v^2-u^2}{2} b + 0 ab\ (\in A)
\end{align*}
  holds, where $\displaystyle (\mathrm{L})\int$ is a Lebesgue integration.
\end{itemize}
Thus,
  \[ (\mathrm{B})\int_{\isoinv(\II_B)} (k_1e_1+k_2e_2+k_4b) \otimes_{\kk} \dd A
  =  \frac{1}{2}e_1 + \frac{1}{2}e_2 + 0 a + \frac{1}{2} b + 0 ab. \]
Similarly, the integration of the integral (\ref{examp:int 2}) can be understood as a composition of the vector valued integral
\[(\mathrm{B})\int_{[u,v]_B} (r_2\varepsilon_2+r_3\varepsilon_3+r_5\beta) \otimes_{\kk}
  \det(\pmb{J})\dd B\]
and the $\kk$-linear map $\varphi$ sending each element $r_1\varepsilon_1 + r_2\varepsilon_2 + r_3\varepsilon_3 + r_4\alpha + r_5\beta$ to $r_1+r_2+r_3+r_4+r_5$ in the case for $u=0<v=1$, where:
\begin{itemize}
  \item $\pmb{J}$ is the Jacobi Matrix
     \[\pmb{J} = \left(\begin{matrix}
      0 & 0 & 1 & 0 & 0 \\
      0 & 1 & 0 & 0 & 0 \\
      1 & 0 & -1& 0 & 0 \\
      0 & 0 & 0 & 0 & 1 \\
      0 & 0 & 0 & 1 & -1
     \end{matrix}\right) \]
     of $\isoinv: B\to A$;
  \item $\dd B = (\dd r_1 \cdot \varepsilon_1)\wedge (\dd r_2 \cdot \varepsilon_2)
          \wedge (\dd r_3 \cdot \varepsilon_3)\wedge (\dd r_4 \cdot \alpha)
          \wedge (\dd r_5 \cdot \beta)$;
  \item and the following equation
\begin{align*}
   &  (\mathrm{B})\int_{[u,v]_B} (r_2\varepsilon_2+r_3\varepsilon_3+r_5\beta) \otimes_{\kk}
      \det(\pmb{J})\dd B \\
=\ &  (v-u)^4\cdot(\mathrm{L})\int_{u,v}r_2\varepsilon_2 \cdot \dd r_2\cdot\varepsilon_2 \\
   &+ (v-u)^3\cdot(\mathrm{B})\iint_{(r_3,r_5)\in[u,v]^{\times 2}}
      r_3\varepsilon_3\cdot \dd r_3 \wedge \dd r_5\cdot \beta \\
   &+ (v-u)^3\cdot(\mathrm{B})\iint_{(r_2,r_5)\in[u,v]^{\times 2}}
      r_5\beta\cdot \dd r_5\wedge\dd r_2\cdot \varepsilon_2  \\
=\ & 0\varepsilon_1 + (v-u)^4\cdot\frac{(v^2-u^2)}{2}\varepsilon_2 + 0\varepsilon_3
    + 0\alpha + (v-u)^4(v^2-u^2)\beta
\end{align*}
  holds.
\end{itemize}
Thus,
\begin{align*}
    (\mathrm{B})\int_{[u,v]_B}
     (r_2\varepsilon_2+r_3\varepsilon_3+r_5\beta) \otimes_{\kk} \dd B
=   0\varepsilon_1 + \frac{1}{2}\varepsilon_2 + 0\varepsilon_3
  + 0\alpha + \beta \ (\in B)
\end{align*}
Therefore, (\ref{examp:int 3}) can be understood as the equation
\[ \phi\bigg((\mathrm{B})\int_{\isoinv(\II_B)} f\dd A\bigg)
= \frac{3}{2} = \varphi\bigg((\mathrm{B})\int_{\II_B} f\circled \isoinv\dd B\bigg) \]
about two Bochner intersections in Example \ref{examp}.

Notice that $\phi$ is induced by the sum $\sum_{v\in\Q_0} k_v$ of the augmentations $(A\to \kk e_v)_{v\in\Q_0}$ of $A$
and $\varphi$ is induced by the sum  $\sum_{v\in\mathcal{P}_0} r_v$ of the augmentations $(B\to \kk \varepsilon_v)_{v\in\mathcal{P}_0}$ of $B$.
Here, an {\defines augmentation} of an associative algebra $A$ over a field (or a commutative ring) $\kk$ is a $\kk$-algebra homomorphism $A\to\kk$;
and an algebra together with an augmentation is called an {\defines augmented algebra}, see \cite{LV2012}.
\end{remark}

\section{\sectcolor Applications: Stieltjes integrations} \label{sect:app}

We provide some applications for Theorem \ref{thm:main1} in this section.

\subsection{\sectcolor Lebesgue-Stieltjes integrations} \label{app:LSint}

Let $f:[\alpha,\beta] \to \RR$ be a non-negative function and bounded and $\dfct: [\alpha,\beta] \to \RR$ be a monotone non-decreasing and right-continuous, function where $\alpha, \beta \in \RR$.
Define $w((s,t]) = \dfct(t)-\dfct(s)$ and $w(\{\alpha\})=0$ (alternatively, the construction works for $\dfct$ left-continuous, and $w([s,t)) = \dfct(t)-\dfct(s)$ $w(\{\beta\})=0$).
By Carath\'{e}odory's extension theorem, there is a unique Borel measure $u_{\dfct}$ on $[\alpha,\beta]$ which agrees with $w$ on every interval.
The measure $u_{\dfct}$ arises from an outer measure (in fact, a metric outer measure) given by
\[u_{\dfct}(E) = \inf\left\{ \sum_i u_{\dfct}(I_i) \ \bigg|\  E \subseteq \bigcup_i I_i  \right\}\]
the infimum taken over all coverings of $E$ by countably many semiopen intervals.
In analysis, this measure is called the {\defines Lebesgue-Stieltjes measure} associated with $\dfct$.
Furthermore, the {\defines Lebesgue-Stieltjes integral}
\[(\mathrm{L\text{-}S})\int_{\alpha}^{\beta} f(x)\dd \dfct(x)\]
is defined as the Lebesgue integral of $f$ with respect to the measure $u_{\dfct}$ in the usual way.

\begin{example} \rm \label{ex:LSint}
Keep the notations from Definition \ref{def:fct}.
Take a monotone non-decreasing and right-continuous function $\varphi:[0,1] \to \RR$, $\itLamb_2=\RR$,
and the category $\scrA_2^p$ satisfying L-condition ($\II=[x_0, x_1]$ satisfies $x_0 = \min\limits_{x}\varphi(x) = \varphi(0)$ and $x_1 = \max\limits_{x}\varphi(x) = \varphi(1)$),
and let $\scrA_1^p$ satisfy the following conditions
\begin{itemize}
  \item[(1)] The conditions (L1)--(L5) in L-condition hold.
  \item[(2)] The measure $\mu_1$ is the Lebesgue-Stieltjes measure $u_{\varphi}$ associated with a function $\varphi$.
\end{itemize}
Then $\mu_1$ sending each interval $(a,b]$ to $\varphi(b)-\varphi(a)$ and $\mu_2$ is the Lebesgue measure defined on $\Sigma([x_0,x_1])$.
Consider the map $\Sigma([0,1]) \to \Sigma([x_0,x_1])$, $S\mapsto\iso|_{S}(S)$ induced by $\iso|_{(a,b]}((a,b]) = (\varphi(a),\varphi(b)]$ (i.e., induced by $\iso: x\mapsto\varphi(x)$),
then the diagram (\ref{comm.diagr in def:fct}) and the equation (\ref{comm.diagr.eq in def:fct}) are
 \[\xymatrix@C=2.5cm@R=2.5cm{
  \Sigma([0,1]) \ar[r]^{\mu_1=u_{\varphi}} \ar[d]_{S\mapsto\iso|_{S}(S)}
& \RR^{\ge 0} \ar@{=}[d]^{1_{\RR^{\ge 0}} \ ,  }
  \\
  \Sigma([x_0,x_1]) \ar[r]^{\mu_2}_{\text{(Lebesgue measure)}}
& \RR^{\ge 0}
}\]
and, for any $S\in\Sigma([0,1])$,
\[\mu_2\circled \iso|_S(S) = \mu_1(S) =  u_\varphi(S), \]
respectively.  Then (\ref{formula:main1}) yields that
\begin{align} \label{formula:cat-LSint}
  \w{T}_{(\RR, u_{\varphi}([0,1]), \mhomo{1})}(f)
 = \w{T}_{(\RR, \mu_2([x_1,x_0]), \mhomo{2})} (f\circled\isoinv)
\end{align}
for any $0\le f \in \w{\bfS_{\id_{\RR}}([0,1])}$, where $u_{\varphi}([0,1])=\varphi(1)-\varphi(0)$ and $\mu_2([x_0,x_1])=x_1-x_0$.

Notice that the left of (\ref{formula:cat-LSint}) is a Lebesgue-Stieltjes integral and the right of it is a Lebesgue integral in this example. In other words, we obtain
\[
 (\text{L-S})\int_0^1 f(x) \dd\varphi(x) = (\text{L}) \int_{x_0}^{x_1} f(\isoinv(x)) \dd\mu_2.
\]
\end{example}

\subsection{\sectcolor Riemann-Stieltjes integrations} \label{app:RSint}

Let $f$ be a real variable on the interval $[\alpha,\beta]$ with respect to another function $\varphi:\RR \to \RR$.
Take $P:$ $\alpha = t_0 < t_1 < \ldots < t_n = \beta$ a partition of $[\alpha,\beta]$, and, for any sequence $\{\eta_i\}_{i=1}^n$ with $\eta_i\in [t_i, t_{i+1}]$, define the {\defines Riemann-Stieltjes sum} by
\[ S(P, f, \varphi) = \sum_{i=0}^{n-1} f(\eta_i)(\varphi(t_{i+1})-\varphi(t_i)). \]
The {\defines Riemann-Stieltjes integral} of $f(x)$ respect to $\varphi(x)$ is the limit
\[ (\text{R-S})\int_{\alpha}^{\beta}  f(x) \dd\varphi(x)
:= \lim_{\lambda \to 0} S(P,f,\varphi), \]
where $\lambda = \max\limits_{0\le i< n}(t_{i+1}-t_i)$.

\begin{example} \rm \label{ex:RSint}
Keep the notations from Definition \ref{def:fct}. Now we provide an interpretation for Riemann-Stieltjes integration in the case of $\varphi: [0,1] \to \RR$ to be an injective absolutely continuous function.
Take $\itLamb_2=\RR$, and the category $\scrA_2^p$ satisfying L-condition ($\II=[x_0, x_1]$ satisfies $x_0 =  \min\limits_{x}\varphi(x)$ and $x_1 = \max\limits_{x}\varphi(x)$),
and let $\scrA_1^p$ satisfy the following conditions.
\begin{itemize}
  \item[(1)] The conditions (L1)--(L5) in L-condition hold.
  \item[(2)] $\mu_1$ is the measure induced by $[a,b] \mapsto \varphi(b)-\varphi(a)$ for any sufficiently small interval $[a,b] \subseteq [0,1]$.
\end{itemize}
Consider the map $\Sigma([0,1]) \to \Sigma([x_0,x_1])$, $S\mapsto\iso|_{S}(S)$ induced by $\iso: x\mapsto\varphi(x)$,
then, similar to (\ref{formula:cat-LSint}), we have
\begin{align}\label{formula:cat-RSint}
 \w{T}_{(\RR,\ u_1([0,1]),\ \mhomo{1})}(f)
   = \w{T}_{(\RR,\ \mu_2([x_0,x_1]),\ \mhomo{2})}(f\circled\isoinv)
\end{align}
for any $f \in \w{\bfS_{\id_{\RR}}([0,1])}$ by (\ref{comm.diagr.eq in def:fct}), where
$$\mu_1([0,1]) = \lim\limits_{\lambda\to 0} \sum\limits_{i=1}^{n-1} (\varphi(t_{i+1})-\varphi(t_i)), $$
$0=t_1<t_2<\ldots<t_n=1$ is an arbitrary partition of $[0,1]$, $\lambda=\max(t_{i+1}-t_i)$.

Notice that the left of (\ref{formula:cat-RSint}) is a Riemann-Stieltjes integral and the right of it is a Lebesgue integral in this example.
In other words, we obtain
\[(\text{R-S})\int_0^1 f(x) \dd\varphi(x)
= (\text{L}) \int_{x_0}^{x_1} f(\isoinv(x)) \dd\mu_2.
 \]
\end{example}

\subsection{\sectcolor Integration by substitution} \label{app:substitution}

Keep the notations from Definition \ref{def:fct}, and consider the categories $\scrA^p_1$ and $\scrA^p_2$ satisfies the following conditions:
\begin{itemize}
  \item[(1)] $\scrA^p_1$ and $\scrA^p_2$ satisfy L-conditions are integral Banach module categories of $\RR$, that is, $\scrA^p_1=\scrA^p_2=\scrA^1$ (take ${_{1}\!\II}=[0,1]$ and ${_{2}\!\II}=[\alpha,\beta]$ in this case);
  \item[(2)] $\fct: \RR\to \RR$ is a function such that $\fct|_{{_{1}\!\II}}: {_{1}\!\II}=[0,1] \to {_{2}\!\II}=[\alpha, \beta]$ is  continuous and bijective;
  \item[(3)] and $\iso=\fct$ (it is induced by the diagram (\ref{comm.diagr in def:fct})).
\end{itemize}
Then we have $\isoinv=\fct^{-1}$ is bijective and obtain
\begin{align}\label{Substitution1}
   (\mathrm{L}) \int_0^1 f(x) \dd(\fct\circled\mu)
 = (\mathrm{L}) \int_{\alpha}^{\beta} f(\isoinv (x)) \dd\mu
\end{align}
by Theorem \ref{thm:main1}, where $\mu_1$ and $\mu_2$, written as $\mu$ in the above formula for simplification, are Lebesgue measures defined on the $\sigma$-algebras $\Sigma([0,1])$ and $\Sigma([\alpha,\beta])$, respectively.
Furthermore, if $\fct$ is differentiable and $f$ is a step function in $E_u$ (or, generally, is a Riemann integrability function), then the left integral in the above equation is also a Riemann-Stieltjes integral, and furthermore, two integrals in the above formula are Riemann integrals, that is, we have
\begin{align}\label{Substitution2}
   (\text{R-S}) \int_0^1 f(x) \dd \fct(x)
 = (\text{R}) \int_0^1 f(x) \dd \fct(x)
 \mathop{=}\limits^{\spadesuit}
   (\text{R}) \int_{\alpha}^{\beta} f(\fct^{-1}(x)) \dd x.
\end{align}
The integral on the left side of the $\spadesuit$ can be seen as a Riemann integral obtain by applying the substitution $x=\fct(y)$ to the Riemann integral on the right side of $\spadesuit$.
The formulas (\ref{Substitution1}) and (\ref{Substitution2}) are called the {\defines substitution rules} of integral in analysis.

\subsection{\sectcolor Lebesgue integrations and Riemann integrations} \label{app:Lint and Rint}

By the works of the authors of \cite{LLHZpre}, the Lebesgue integration is a morphism in the category $\scrA^p$ satisfying L-condition.
Interestingly, if $\scrA^p_1$ and $\scrA^p_2$, the categories satisfying L-condition, coincide,
then, keep the notations from Definition \ref{def:fct}, ${_{1}\!\II}={_{2}\!\II}=[0,1]$, and $\iso=\mathrm{id}_{\RR}$ ($=\isoinv$),
we have $\scrA^p_1$ and $\scrA^p_2$ satisfy the conditions (1)--(3) in Subsection \ref{app:substitution} ($\alpha=0$, $\beta=1$), and, immediately, we have $\fct=\id_{\RR}: x\mapsto x$ for all $x\in\RR$ by the commutative diagram (\ref{comm.diagr in def:fct}).
In this case, the substitution rule (\ref{Substitution1}) is trivial and provide an interpretation for Lebesgue integration.

It is well-know that all Riemann integrability functions defined on $[0,1]$ are elements lying in $\w{\bfS_{\id_{\RR}}([0,1])}$.
Let $R([0,1])$, as a subset of $\w{\bfS_{\id_{\RR}}([0,1])}$, be the set of all Riemann integrability functions,
then, naturally, we have the restriction
\[\gamma_{\frac{1}{2}}|_{R([0,1])^{\oplus_1 2}}: R([0,1])^{\oplus_1 2} \to R([0,1])\]
of $\gamma_{\frac{1}{2}}$, and obtain a triple $(R([0,1]), \id_{[0,1]}, \gamma_{\frac{1}{2}}|_{R([0,1])^{\oplus_1 2}})$, as a object in $\scrN^1$, which is an subobject of $(\bfS_{\id_{\RR}}([0,1]), \id_{[0,1]}, \gamma_{\frac{1}{2}})$.
Then the restriction
\[ \w{T}_{(\RR,1,m)}|_{R([0,1])}
   \in
   \Hom_{\scrN^1}
     (
       (R([0,1]), \id_{[0,1]}, \gamma_{\frac{1}{2}}|_{R([0,1])^{\oplus_1 2}}),
       (\RR, 1, m)
     ) \]
of the morphism
\[ \w{T}_{(\RR,1,m)}
   \in
   \Hom_{\scrA^1}
     (
       (\bfS_{\id_{\RR}}([0,1]), \id, \gamma_{\frac{1}{2}}),
       (\RR,1,m)
     )\]
sending each function in $R([0,1])$ to its Riemann integral.

Therefore, continuing to discuss the first paragraph of this subsection,
if $f\in R([0,1])$, then (\ref{Substitution1}) and (\ref{Substitution2}) are trivial and coincident.
Two equations provide an interpretation for Riemann integrations.


\section*{Acknowledgements}

Yu-Zhe Liu is supported by the National Natural Science Foundation of China (Grant No. 12171207),
Guizhou Provincial Basic Research Program (Natural Science) (Grant No. ZK[2024]YiBan066)
and Scientific Research Foundation of Guizhou University (Grant Nos. [2023]16, [2022]53, [2022]65).

Hanpeng Gao is supported by the National Natural Science Foundation of China (Grant No. 12301041).

Shengda Liu is supported by the National Natural Science Foundation of China (Grant No. 62203442).






\begin{thebibliography}{10}

\bibitem{APL2021diff}
M.~Alvarez-Picallo and J.-S.~P. Lemay.
\newblock Cartesian difference categories.
\newblock {\em Log. Methods Comput. Sci.}, 17(3):Paper No. 23, 48, 2021.
\newblock
  \href{https://doi.org/10.46298/lmcs-17(3:23)2021}{DOI:10.46298/lmcs-17(3:23)2021}.

\bibitem{ASS2006}
I.~Assem, D.~Simson, and A.~Skowro\'{n}ski.
\newblock {\em Elements of the Representation Theory of Associative Algebras,
  Volume 1 Techniques of Representation Theory}.
\newblock Cambridge University Press, The Edinburgh Building, Cambridge, UK,
  2006.

\bibitem{BCS2006diff}
R.~F. Blute, J.~R.~B. Cockett, and R.~A.~G. Seely.
\newblock Differential categories.
\newblock {\em Math. Struct. Comput. Sci.}, 16(06):1049--1083, 2006.
\newblock \href{https://doi.org/10.1017/S0960129506005676}
  {DOI:10.1017/S0960129506005676}.

\bibitem{BCS2015diff}
R.~F. Blute, J.~R.~B. Cockett, and R.~A.~G. Seely.
\newblock \href{http://emis.icm.edu.pl/journals/TAC/volumes/22/23/22-23.pdf}
  {Cartesian differential storage categories}.
\newblock {\em Theor. Appl. Categ.}, 30:620--687, 2015.
\newblock
  \href{https://mathscinet.ams.org/mathscinet/article?mr=3346979}{AMS:MR3346979}.

\bibitem{Bor1994}
F.~Borceux.
\newblock {\em Handbook of categorical algebra 1: basic category theory}.
\newblock Cambridge University Press, Cambridge University Press, Cambridge,
  UK, 1994.
\newblock
  \href{https://doi.org/10.1017/CBO9780511614309}{DOI:10.1017/CBO9780511614309}.

\bibitem{Carter2000}
M.~Carter and B.~van Brunt.
\newblock {\em The Lebesgue-Stieltjes Integral}, pages 49--70.
\newblock Springer New York, New York, NY, 2000.
\newblock
  \href{https://doi.org/10.1007/978-1-4612-1174-7_4}{DOI:10.1007/978-1-4612-1174-7\_4}.

\bibitem{CL2018Cart-int}
J.~R.~B. Cockett and J.-S.~P. Lemay.
\newblock Cartesian integral categories and contextual integral categories.
\newblock In {\em Proceedings of the {T}hirty-{F}ourth {C}onference on the
  {M}athematical {F}oundations of {P}rogramming {S}emantics ({MFPS} {XXXIV})},
  volume 341 of {\em Electron. Notes Theor. Comput. Sci.}, pages 45--72.
  Elsevier Sci. B. V., Amsterdam, 2018.
\newblock
  \href{https://doi.org/10.1016/j.entcs.2018.11.004}{DOI:10.1016/j.entcs.2018.11.004}.

\bibitem{CL2018int}
J.~R.~B. Cockett and J.-S.~P. Lemay.
\newblock Integral categories and calculus categories.
\newblock {\em Math. Struct. Comp. Sci.}, 29(2):243--308, 2018.
\newblock
  \href{https://doi.org/10.1017/S0960129518000014}{DOI:10.1017/S0960129518000014}.

\bibitem{ER2003}
T.~Ehrhard and L.~Regnier.
\newblock The differential $\lambda$-calculus.
\newblock {\em Theor. Comput. Sci.}, 309(1):1--41, 2003.
\newblock \href{https://doi.org/10.1016/S0304-3975(03)00392-X}
  {DOI:10.1016/S0304-3975(03)00392-X}.

\bibitem{ER2006}
T.~Ehrhard and L.~Regnier.
\newblock Differential interaction nets.
\newblock {\em Theor. Comput. Sci.}, 364(2):166--195, 2006.
\newblock \href{https://doi.org/10.1016/j.tcs.2006.08.003}
  {DOI:10.1016/j.tcs.2006.08.003}.

\bibitem{Eil1945}
S.~Eilenberg and S.~MacLane.
\newblock
  \href{https://community.ams.org/journals/tran/1945-058-00/S0002-9947-1945-0013131-6/S0002-9947-1945-0013131-6.pdf}
  {General theory of natural equivalences}.
\newblock {\em T. Am. Math. Soc.}, 58:231--294, 1945.
\newblock
  \href{https://doi.org/10.1090/S0002-9947-1945-0013131-6}{DOI:S0002-9947-1945-0013131-6}.

\bibitem{G1987}
J.-Y. Girard.
\newblock Linear logic.
\newblock {\em Theor. Comput. Sci.}, 50(1):1--101, 1987.
\newblock \href{https://doi.org/10.1016/0304-3975(87)90045-4}
  {DOI:10.1016/0304-3975(87)90045-4}.

\bibitem{IL2023ana}
S.~Ikonicoff and J.-S.~P. Lemay.
\newblock
  \href{https://cahierstgdc.com/wp-content/uploads/2023/04/IKONICOFF-LEMAY-LXIV-2.pdf}
  {Cartesian differential comonads and new models of {C}artesian differential
  categories}.
\newblock {\em Cah. Topol. G\'{e}om. Diff\'{e}r. Cat\'{e}g.}, 64(2):198--239,
  2023.
\newblock \href{https://mathscinet.ams.org/mathscinet/article?mr=4605864}
  {AMS:MR4605864}.

\bibitem{L1928}
H.~L. Lebesgue.
\newblock {\em Le\c{c}ons sur l'int\'{e}gration et la recherche des fonctions
  primitives: professees au college de France (deuxiemeedition)}.
\newblock Collection de monographies sur la theorie des fonctions, 1928.

\bibitem{Lei2023FA}
T.~Leinster.
\newblock A categorical derivation of lebesgue integration.
\newblock {\em J. Lond. Math. Soc.}, 107(6):1959--1982, 2023.
\newblock \href{https://doi.org/10.1112/jlms.12730}{DOI:10.1112/jlms.12730}.

\bibitem{Lemay2019}
J.-S.~P. Lemay.
\newblock Differential algebras in codifferential categories.
\newblock {\em J. Pure. Appl. Algebra}, 223(10):4191--4225, 2019.
\newblock \href{https://doi.org/10.1016/j.jpaa.2019.01.005}
  {DOI:10.1016/j.jpaa.2019.01.005}.

\bibitem{Lemay2023}
J.-S.~P. Lemay.
\newblock {Cartesian Differential Kleisli Categories}.
\newblock {\em Electronic Notes in Theoretical Informatics and Computer
  Science}, 3, 2023.

\bibitem{LLHZpre}
Y.-Z. Liu, S.~Liu, Z.~Huang, and P.~Zhou.
\newblock Normed modules and the categorization of lebesgue integrations.
\newblock in preprint, 2024.

\bibitem{LV2012}
J.-L. Loday and B.~Vallette.
\newblock {\em Algebraic operads}, volume 346 of {\em Grundlehren Math. Wiss.}
\newblock Berlin, New York: Springer-Verlag, 2012.
\newblock
  \href{https://zbmath.org/?format=complete&q=an:1260.18001}{ISBN:978-3-642-30361-6},
  doi:10.1007/978-3-642-30362-3.

\bibitem{Stie1984}
T.~J. Stieltjes.
\newblock \href{https://afst.centre-mersenne.org/item/AFST_1894_1_8_4_J1_0.pdf}
  {Recherches sur les fractions continues}.
\newblock {\em Ann. Fac. Sci. Toulouse}, VIII(3):1--122, 1984.
\newblock \href{https://mathscinet.ams.org/mathscinet -getitem?mr=1344720}
  {MR:1344720}.

\end{thebibliography}


\def\cprime{$'$}

\end{document}